\documentclass[11pt,a4paper]{scrartcl}

\PassOptionsToPackage{mathfont=stix2}{mystyle-koma-enhanced}
\usepackage{mystyle-koma-enhanced}
\usepackage{comment}
\usepackage{tikz}
\usetikzlibrary{backgrounds,patterns,positioning,shapes.geometric}
\usepackage{bbm}

\MSC{60K35, 82B43, 05C81}
\Keywords{Poisson zoo, loop soup, percolation, metric graph}

\renewcommand{\P}{\mathbb{P}}
\newcommand{\E}{\mathbb{E}}
\newcommand{\N}{\mathbb{N}}

\newcommand{\0}{\mathbf{o}}

\title{(Non-)coincidence of critical parameters for Poisson Zoos and Loop Soup Percolation\\ on \texorpdfstring{$\mathbb Z^d$}{Z\^d}, $d > 4$, and \texorpdfstring{$\mathbb T_d$}{T\_d}, $d \ge 3$}
\author{
Volker Betz \orcidlink{} \\ \texttt{\small betz@mathematik.tu-darmstadt.de} \\
\and
Alexander Drewitz \orcidlink{0000-0002-5546-3614} \\ \texttt{\small adrewitz@uni-koeln.de} \\
\and
Andreas Klippel \orcidlink{} \\ \texttt{\small anklippe@uni-mainz.de} \\
\and
Christian M\"{o}nch \orcidlink{0000-0002-6531-6482} \\ \texttt{\small cmoench25@gmail.com}
}
\date{\today}

\begin{document}
\maketitle

\begin{abstract}
    In this article we investigate the (non)-coincidence of critical parameters for various related percolation problems. More precisely, for the random walk loop soup we show that on $\mathbb Z^d$, $d\ge 5$, the critical parameters for the percolation problems differ on the discrete graph and the respective metric graph. Moreover, on trees we deduce an analogous statement as well as the coincidence of the critical parameters for percolation and susceptibility for a more general class of percolation problems, the so-called Poisson zoo.
    Along the way we develop the useful notion of sensitivity to Bernoulli enhancements of such percolation problems with long range correlations, which builds on previously developed enhancement ideas.
\end{abstract}

\section{Introduction}\label{sec:intro}

Models of percolation with long-range correlations have been a very active
area of research in the last decade, with notable progress e.g.\ for random-cluster
and Potts models \cite{DuminilCopinRaoufiTassion2019}, loop soup percolation
and loop clusters \cite{LeJanLemaire13,ChangSapo16,Lupu2016}, including
Brownian loop soup clusters in Euclidean spaces
\cite{LawlerWerner2004,We-20,jego2026threedimensionalbrownianloopsoup}, the closely related
Gaussian free field excursion sets \cite{BrLeMa-87,RoSz-12,DPR18}, and spatial random
permutations \cite{BetzUeltschi2009}. In this article we consider two closely
related loop models: the (discrete) random walk loop soup and its metric graph, or
cable system, counterpart. Part of our results will be formulated in the
more general setup of so-called Poisson zoos \cite{pete2025nonamenablepoissonzoo}.

A main focus in the investigation of such models is the understanding of
percolation phase transitions, i.e.\ the emergence of infinite connected
components when one sweeps over the parameter range. Broadly speaking, these
problems are still significantly less understood than in the classical setting of
Bernoulli percolation. A natural question in loop soup percolation is whether passing from the discrete graph to its metric graph, or cable system, counterpart changes the percolation threshold. The cable system retains the discrete fundamental loops but supplies additional  connectivity through smaller loops. 
We prove that this extra connectivity strictly lowers the critical parameter for the random-walk loop soup on  $\mathbb Z^d,$ $d\geq5$, and on every regular tree of degree at least three. (It should be noted that such threshold separation of parameters is not automatic: for the loop soup in the killed half-plane, the discrete and cable system thresholds both equal 1/2.)
More generally, we develop a notion of sensitivity for Poisson-zoo percolation and show that, under a finite-overlap assumption on regular trees, every positive independent Bernoulli enhancement yields percolation at some subcritical intensity already.

More precisely, on a connected graph $G = (V,E)$ with
at most countable vertex set and bounded degree,
the underlying process is the continuous-time nearest-neighbor random walk on
the vertex set $V$; at each jump time it crosses one of the incident edges,
chosen uniformly \cite{LawlerWerner2004,LeJanStFlour}. A loop is obtained by conditioning this process to return to
its starting vertex after a positive time and then recording its vertex trace.
Equivalently, after forgetting holding times, the non-trivial trace is a finite
cyclic nearest-neighbor path. In particular, loops are ``discrete'' graph objects in contrast to the metric graph loops discussed below.
The set of all loops is thus countable, and to  each loop $\ell$, we can assign
a probability $p(\ell)$ with which the loop is present. Each loop is independently either
present or absent. The values of $p(\ell)$ are
 specific, and are given in the next section, but for now it is enough to know
that they decay to zero when the length $|\ell|$ of $\ell$ increases to infinity; furthermore, they
depend on an intensity parameter $\alpha$ with $\lim_{\alpha \to 0} p_\alpha(\ell) = 0$.
This induces a percolation model by declaring
an edge $e \in E$ to be open if a loop $\gamma$ is present that crosses this edge.
Bernoulli edge percolation with probability $p$ is a special case of this construction
when we set $p(\ell):=p$ for each unrooted loop with $|\ell|=2$, and
$p(\ell):=0$ otherwise.
The study of loop clusters and their percolative properties has emerged naturally \cite{LeJanLemaire13,ChangSapo16,We-20,MR5067664}.

The cable system, or metric graph, in turn is obtained by replacing each edge
$e=\{x,y\}$ by a line segment of length $1/2$ whose endpoints are identified with $x$ and $y$, respectively.
On this locally one-dimensional space one can run Brownian motion, with the convention
that whenever the path reaches a vertex it chooses one of the incident cables
uniformly for its next excursion (see \cite{Lupu2016} for a definition). The metric graph loop soup is the
corresponding Poisson ensemble of Brownian loops (with intensity measure given in \eqref{eq:metricIntensity}).

A metric graph loop may avoid the vertices of the underlying discrete graph altogether, or may hit a single vertex without
making a non-trivial graph loop, in the sense of its trace on the discrete graph consisting of at least two vertices. The latter non-trivial traces are the \emph{fundamental} loops of the metric graph loop soup, and their union on $V$ has the same law as that of the discrete loop soup (with the same intensity).

The metric graph also induces its own percolation problem: two points are
connected if one can travel from one to the other by visiting finitely many loops each of which intersects the trace of their predecessor. An edge may be connected
through cable portions covered by loops that do not produce a discrete crossing
of that edge, or by excursions attached to a fundamental loop but not recorded
in its vertex trace. As a consequence, the cable system can be viewed as the discrete loop
soup together with an additional (dependent) enhancement.

This brings the models into the realm of enhanced percolation. A base
percolation model is first sampled at intensity $\alpha$, and then an
additional, independent or dependent, mechanism is added with strength
$\delta>0$. One asks whether every positive enhancement can create
percolation while the base model is still subcritical. If this is possible,
we say that the base model is \emph{sensitive to enhancement}.
As mentioned above, such sensitivity is not as obvious as it may seem, with the killed half-plane providing a counterexample, recalled in Remark~\ref{rem:loop soup-threshold-discussion} \ref{item:half-plane-no-sensitivity}.

This additional cable system connectivity is (in spirit) also reflected in recent work on
critical cable graph loop soups and Gaussian free fields. 
Similarly to the percolation problems of the discrete loop soup, the percolative study of the metric graph loop soup and its associated Gaussian free
field have experienced tremendous research activity, see \cite{MR4914069,DiWi,MR4421175,cai2026gapclusterdimensionsloop} and references therein for an incomplete survey.

Section \ref{sec:MR} introduces Poisson-zoo percolation and proves the tree results on the coincidence of critical parameters (Theorems~\ref{thm:zsharp}) and enhancement sensitivity (Theorem \ref{thm:zenhan}). Section \ref{sec:LS} specializes the framework to discrete loop soups and derives threshold separation on regular trees; it then also proves the corresponding result on  $\mathbb Z^d$, $d\ge5$, by a separate one-generation comparison. Section \ref{sec:trees} contains the proofs of the Poisson-zoo theorems.

\section{Models and results}\label{sec:MR}

Let $G=(V,E)$ be a vertex-transitive graph with countable vertex set $V$ and
finite degree. Define
\[
\mathfrak A:=\{A\subseteq G:\ A\text{ is a finite connected subgraph}\},
\]
and for $A\in\mathfrak A$, write $V(A)$ and $E(A)$ for its vertex and edge sets.
The elements of $\mathfrak A$ will be called \emph{graph animals}. Let
$\mu:\mathfrak A\to[0,\infty)$ be invariant under graph automorphisms, meaning
that $\mu(A)=\mu(\phi(A))$ for every $A\in\mathfrak A$ and every graph
automorphism $\phi$ of $G$. For $\alpha>0$, a \emph{Poisson zoo} (a name coined in
\cite{RathRokob22,pete2025nonamenablepoissonzoo}) of intensity $\alpha\mu$ is a
family $(N_A^\alpha)_{A\in\mathfrak A}$ of independent random variables, where
$N_A^\alpha$ is Poisson-distributed with parameter $\alpha\mu(A)$. We define the
occupied edge set of the zoo as
\[
\mathcal O^\mu_\alpha
:=
\bigcup_{A:\,N_A^\alpha\ge1}E(A).
\]
For $\0\in V$, we write $C^\mu_\alpha(\0)$ for the connected component of the
graph induced by $\mathcal O^\mu_\alpha$ that contains $\0$.
Classical Bernoulli bond percolation with parameter
$p=1-\exp(-\alpha)$ is the special case obtained by putting $\mu(A)=1$ whenever
$|E(A)|=1$, and $\mu(A)=0$ otherwise; more generally, however, this model covers a broad
 range of percolation models with long range correlations, and each $A\in\mathfrak A$ is present independently with probability
$1-\exp(-\alpha\mu(A))$.
As a further example, one obtains a {\em discrete Boolean model} by choosing centers
according to a Poisson process on $V$, attaching to each center an independent
finite random radius, and occupying the ball around the center; in the zoo
language, the animals are precisely these finite balls, with $\mu$ given by the
intensity of the center-radius pair.

The example most relevant to us is induced by the (trace of the) random walk loop soup. Let
$(X_t)_{t\geq0}$ be the continuous-time variable-speed nearest-neighbor
(simple) random walk on $G$, with heat kernel $p_t(x,y)$.
For \(t>0\), let \(D([0,t],V)\) denote the space of right-continuous
paths with left limits from \([0,t]\) to \(V\), where \(V\) carries the
discrete topology. Define the space of based nearest-neighbor loops by
\[
\begin{aligned}
\mathcal L^{\mathrm b}
:=
\bigl\{(t,\gamma):\;&t>0,\ \gamma\in D([0,t],V),\
\gamma(0)=\gamma(t),\\
&\gamma\text{ has finitely many jumps, each traversing an edge of }G
\bigr\}.
\end{aligned}
\]
For \((t,\gamma)\in\mathcal L^{\mathrm b}\), extend \(\gamma\)
periodically to \(\mathbb R\) and define
\[
(\theta_s\gamma)(r):=\gamma(r+s),
\qquad s\in[0,t),\quad r\in[0,t].
\]
We identify \((t,\gamma)\) and \((t',\gamma')\) if \(t=t'\) and
\(\gamma'=\theta_s\gamma\) for some \(s\in[0,t)\). The resulting quotient
\[
\mathcal L:=\mathcal L^{\mathrm b}/\!\sim
\]
equipped with its quotient sigma-field is the space of unrooted oriented
loops.

For $\gamma\in\mathcal L$, write
$\operatorname{tr}(\gamma)$ for its graph trace: the finite connected subgraph
whose vertices are visited by $\gamma$ and whose edges are crossed by it. Set
\[
\mathcal L^{\mathrm{nt}}
:=
\bigl\{\gamma\in\mathcal L:
E\bigl(\operatorname{tr}(\gamma)\bigr)\neq\varnothing
\bigr\}.
\]
Thus $\mathcal L^{\mathrm{nt}}$ consists of the non-trivial loops, namely those
that make a non-zero number of jumps. For $x\in V$ and $t>0$, let
$\mathbb P^t_{x,x}$ be the random-walk bridge probability measure of duration
$t$, starting and ending at $x$. We define the non-trivial Markov loop measure
$\nu$ on $\mathcal L$ by
\begin{equation}\label{eq:discreteLoopMeasure}
\nu(B)
:=
\sum_{x\in V}\int_0^\infty
\frac{1}{t}\,
\mathbb P^t_{x,x}\bigl(B\cap\mathcal L^{\mathrm{nt}}\bigr)
p_t(x,x)\,{\mathrm d}t,
\qquad B\subseteq\mathcal L.
\end{equation}
The restriction to $\mathcal L^{\mathrm{nt}}$ discards zero-jump loops before
the trace map is applied. Such loops occupy no edge and hence do not affect any
percolation event, but retaining them would give infinite mass to singleton
traces.

The graph-animal intensity is the push-forward of $\nu$ under the trace map:
\begin{equation}\label{eq: loop soup intensity}
\mu:=\operatorname{tr}_{\#}\nu,
\qquad
\mu(A)
=
\nu\bigl(\{\gamma\in\mathcal L^{\mathrm{nt}}:
\operatorname{tr}(\gamma)=A\}\bigr),
\qquad A\in\mathfrak A.
\end{equation}
The Poisson point process on $\mathcal L^{\mathrm{nt}}$ with intensity
$\alpha\nu$ is the non-trivial Markovian loop soup. Its image under the trace
map is the Poisson zoo with intensity $\alpha\mu$, and the two processes induce
the same occupied edge set. We will use this identification below; see
\cite{LeJanStFlour,LeJanLemaire13,ChangSapo16}.

We write
\[
\chi_\mu(\alpha) := \mathbb E[|C^\mu_\alpha(\0)|]
\]
for the expected size of the cluster containing $\0$ (also referred to as {\em susceptibility}).
By the transitivity of $G$ and the automorphism invariance of $\mu$, it is independent of the specific choice of
$\0$. As usual in percolation, we introduce the percolation threshold
\begin{equation}\label{eq:alphac}
\alpha_{\mathsf c}^\mu := \alpha_{\mathsf c}^\mu(G) := \sup \big\{ \alpha \geq 0: \mathbb P(|C^\mu_\alpha(\0)| =
\infty) = 0\big\}
\end{equation}
and the susceptibility threshold
\begin{equation} \label{eq:alphaSharp}
\alpha_{\#}^\mu := \alpha_{\#}^\mu(G) :=\sup \{ \alpha \geq 0: \chi_\mu(\alpha) < \infty \}.
\end{equation}
The inequality $\alpha_{\mathsf c}^\mu \geq \alpha_{\#}^\mu$ is immediate.  Since replacing $\alpha$ by $\alpha + \delta$ just means putting an independent
Poisson zoo of intensity $\delta \mu$ on top of one of intensity $\alpha \mu$,
both $\chi_\mu(\alpha)$ and $\mathbb P(|C^\mu_\alpha(\0)| = \infty)$ are
non-decreasing
functions of $\alpha$.
A classical and important question in percolation models is whether
$\alpha_{\mathsf c}^\mu = \alpha_{\#}^\mu$.
Our first result establishes this equality on regular trees.
To state it,
we introduce the {\em overlap kernel}
\[
W_\mu(u,v) := \sum_{A \in \mathfrak A: u,v \in V(A)} \mu(A)
\]
which is a measure for how easy it is to connect $u$ and $v$ by a graph animal.
We will assume throughout that the overlap constant is finite:
\begin{equation} \label{eq: finite C_ast}
C_\ast(\mu) := \sup_{u \in V} \sum_{v \in V} W_\mu(u,v)  < \infty.
\end{equation}
Due to the vertex-transitivity of
$G$ and the invariance of $\mu$, it suffices to study a fixed
$u \in V$ instead of taking the supremum.
Note also that if for some $\theta > 0$ and $x \in V$ we have
\begin{equation}\label{eq:zexp}
\sum_{A\in\mathfrak A:\,x\in V(A)}
\mu(A)\mathrm{e}^{\theta |V(A)|}<\infty,
\end{equation}
then \eqref{eq: finite C_ast} holds: with
$C_\theta:=\sup_{n\ge1}n\mathrm e^{-\theta n}<\infty$, one has
$|V(A)|\le C_\theta\mathrm e^{\theta|V(A)|}$, and the identity below applies.

Condition
\eqref{eq: finite C_ast} has a natural interpretation.
First of all, we have
\[
C_\ast(\mu) = \sum_{v \in V}
\sum_{A \in \mathfrak A: \0,v \in A} \mu(A) =
\sum_{A \in \mathfrak A: \0 \in A} |V(A)| \mu(A).
\]
Letting $N_A$ denote the Poisson count of animal $A$ in the unit-intensity zoo, we have $\mathbb P(N_A^1 \geq 1) =
\mu(A) + O (\mu(A)^2)$ as $\mu(A) \to 0$, and condition \eqref{eq: finite C_ast}
is fulfilled if and only if
\[
\infty > \sum_{A \in \mathfrak A: \0 \in A} |V(A)|
\mathbb P(N_A^1 \geq 1) =
\mathbb E
\Big[ \sum_{A \in \mathcal A_{\0}^{1}} |V(A)| \Big],
\]
where
$\mathcal A_{\0}^{1}:=\{A\in\mathfrak A:\ \0\in V(A),\ N_A^1>0\}$.

The finite range case is straightforward. If there is an $r<\infty$ such
that $\mu(A)=0$ whenever $\operatorname{diam}(A)>r$, then only finitely many
animals with positive intensity can contain a fixed vertex, and
\eqref{eq: finite C_ast} follows immediately. This is the usual
$r$-dependent setting, where domination by product measures is a standard
comparison tool; see Liggett--Schonmann--Stacey \cite{LSS1997}.

Our main interest is the infinite range case, in particular the Markovian loop
soup, where traces can have arbitrarily large diameter. In this setting
\eqref{eq: finite C_ast} and \eqref{eq:zexp} are a
tail assumptions. We start with a result in the tree case. The strength of the conclusion depends on the strength of the tail assumption imposed on $\mu$.

\begin{theorem} \label{thm:zsharp}
Let $d\geq3$ and let
$\mu$ be an invariant intensity
function on the graph animals of $\mathbb T_d$.
\begin{enumerate}
\item\label{item:coincidence}

  If $\mu$ satisfies \eqref{eq: finite C_ast},
then
\[
\alpha_{\mathsf c}^\mu(\mathbb T_d)=\alpha_{\#}^\mu(\mathbb T_d).
\]

\item \label{item:exponentialCrit}
If $\mu$ satisfies \eqref{eq:zexp}, then for each $\alpha <  \alpha_{\mathsf c}^\mu(\mathbb T_d)$, there
exist constants $C_{\mu,\alpha},c_{\mu,\alpha}>0$ such that for $x \in V(\mathbb T_d)$,
\[
\P\big(|C^\mu_\alpha(x)|\ge n\big)
\le C_{\mu,\alpha}e^{-c_{\mu,\alpha}n}
\qquad\text{for all }n\ge1.
\]
\end{enumerate}
\end{theorem}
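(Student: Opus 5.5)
We propose to prove both parts by comparing the cluster of $\0$ with a multi-type (or single-type) Galton--Watson process that uses the tree structure of $\mathbb T_d$. The comparison will be exact in the subcritical regime and monotone enough in the supercritical regime.

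The basic object is the branching structure of the cluster. Explore the cluster from $\0$ and call the animals present in the zoo that contain $\0$ the first generation. Their union covers a finite subtree $S_1$. Each vertex $v\in V(S_1)$ then spawns the animals through $v$ that are not yet used, and so on. On a tree, two animals that both contain $v$ and extend into different branches at $v$ overlap only in $v$. Decompose each animal $A\ni v$ according to the subtrees hanging off $v$. The key consequence is that, given the explored part, the future is governed by animals meeting the unexplored region only through boundary vertices, and the branches beyond distinct boundary vertices are independent. Poisson independence across animals makes these independent pieces.

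For part~\ref{item:coincidence}, define the ``offspring'' of a vertex $v$ reached from its parent direction. These are the vertices, other than $v$, of animals containing $v$ that lie in the $d-1$ forward subtrees at $v$ or use edges there. We will not count animals containing the parent edge, since those were already revealed. This is a little delicate, because an animal through $v$ may also reach backward, and such backward parts only add vertices already controlled. We aim to show that $|C^\mu_\alpha(\0)|$ is stochastically dominated by, and also dominates, a branching process with the same mean offspring $m(\alpha)=\mathbb E[\text{forward offspring}]$, up to a bounded correction at the root. By \eqref{eq: finite C_ast} the mean offspring is finite and is continuous and increasing in $\alpha$, since it is at most $\alpha C_\ast(\mu)$-type linear expressions in $\alpha\mu$ with Poisson thinning. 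Then:
\begin{itemize}
\item if $m(\alpha)<1$, we get $\chi_\mu(\alpha)<\infty$ by the standard generation-sum bound;
\item if $m(\alpha)>1$, the dominated Galton--Watson process survives, and we must show the cluster does as well.
\end{itemize}
Thus both thresholds equal $\inf\{\alpha:m(\alpha)>1\}$. For $m(\alpha)=1$ we would need more care, but this case does not matter since we only compare suprema. The delicate point is that the offspring counts of different children of $v$ coming from one animal are correlated. This is fine, because a Galton--Watson process with correlated siblings but independent families has the same expected size $\sum m^n$ and the same survival criterion $m>1$, given finite mean. Here we use that distinct vertices of the boundary generate independent families, since any animal contains both only if it contains the path between them, which was already explored.

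The main obstacle is making this independence exact. An animal containing a boundary vertex $w$ may also contain explored vertices, so it was already revealed. We therefore plan to reveal at each explored vertex all animals through it, and to assign each animal to the first explored vertex it contains. Then animals not yet revealed that touch the forward subtree of a boundary vertex $w$ contain no explored vertex, so they contain $w$ only, or nothing explored at all. Those entirely in forward subtrees but not containing $w$ are reached only through other animals, and they are handled recursively by the same rule. Setting this up as a multi-type process, indexed by the position relative to the parent edge, gives a finite-type process by transitivity, and on $\mathbb T_d$ actually a single type.

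For part~\ref{item:exponentialCrit}, the condition \eqref{eq:zexp} gives that the offspring number $\xi$ (the vertex count of the animals at a vertex) has an exponential moment. We will show it for $\alpha<\alpha_{\mathsf c}$, hence $m(\alpha)<1$ by part~\ref{item:coincidence} together with strictness, using continuity of $m$ and the fact that $m(\alpha_{\mathsf c})\le1$. A subcritical Galton--Watson process whose offspring has exponential moments has total progeny with exponential tails: solve $\mathbb E[e^{s|T|}]=e^{s}\,\varphi(\mathbb E e^{s|T|})$ for small $s$, or use the Cramér bound $\mathbb P(|T|\ge n)\le\mathbb P(\sum_{i\le n}\xi_i\ge n)$ through the random-walk (Łukasiewicz) encoding. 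The cluster size is dominated by this total progeny, which gives the claimed bound.
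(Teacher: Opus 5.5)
\textbf{There is a genuine gap in both parts.}

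Your central claim is that, up to a root correction, $|C^\mu_\alpha(\0)|$ both dominates and is dominated by a single-type Galton--Watson process. Its mean $m(\alpha)$ is the expected number of vertices reached from $v$ through one present animal containing $v$ and lying in the forward subtree at $v$. From this you conclude that both thresholds equal $\inf\{\alpha:m(\alpha)>1\}$. Only the upper comparison holds; it is the one-animal bound the paper uses to get $\alpha_{\mathsf c}^\mu>0$ and on $\mathbb Z^d$. The lower comparison fails. A vertex is typically reachable through several chains of animals, so the one-animal process overcounts. Consequently $m(\alpha)>1$ forces neither survival nor $\chi_\mu(\alpha)=\infty$.

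\emph{A counterexample.} Let $\mu$ give mass $1$ to each star (a vertex $x$ with its $d$ incident edges), and write $p=1-e^{-\alpha}$. An edge is occupied iff one of its endpoints carries a present star. Hence $C^\mu_\alpha(\0)$ is exactly a two-type Galton--Watson tree, the types being star present or absent. For $d=3$ its mean matrix is $2\bigl(\begin{smallmatrix}p&1-p\\p&0\end{smallmatrix}\bigr)$, whose Perron root is $<1$ iff $2p(3-2p)<1$, i.e.\ iff $p<(3-\sqrt5)/4\approx0.191$. Your forward offspring comes from the stars at the $d-1$ children of $v$, each contributing $d$ vertices, so $m(\alpha)=d(d-1)p=6p$. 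For $1/6<p<(3-\sqrt5)/4$ you get $m(\alpha)>1$ while $\chi_\mu(\alpha)<\infty$.

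This breaks the threshold identification in part (a). It also breaks the step ``$\alpha<\alpha_{\mathsf c}^\mu\Rightarrow m(\alpha)<1$'' that your part (b) relies on. The proposed repair by a ``finite-type, in fact single-type'' process fails too. The type of a vertex must record how far already-revealed animals protrude into its forward subtree. That needs two types already for stars and infinitely many for unbounded animals. For infinite-type branching, ``infinite mean but extinction'' is exactly the scenario $\alpha_\#^\mu<\alpha_{\mathsf c}^\mu$ you must rule out, so the reduction gains nothing.

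\textbf{What the paper does instead.} It never identifies the threshold with a one-step mean.

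For (a) it argues by contradiction via sprinkling. Suppose $\alpha_\#^\mu<\bar\alpha<\alpha_{\mathsf c}^\mu$.
\begin{itemize}
\item The forward cluster $C^{\mu,+,e}_{\bar\alpha}(y)$ is a.s.\ finite but has infinite mean. By the tree identity its forward boundary then also has infinite mean.
\item An independent increment of intensity $\delta$ contains an i.i.d.\ Bernoulli bridge field, coming from the disjoint leaf-edge classes $\mathfrak B_e$.
\item Forward subtrees behind distinct boundary \emph{edges} are disjoint. So forward clusters plus bridges form a genuine Galton--Watson process with offspring $\operatorname{Bin}(|\partial_E^+C|,q_\delta)$. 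This has infinite mean, so the zoo at level $\bar\alpha+\delta<\alpha_{\mathsf c}^\mu$ percolates, a contradiction.
\end{itemize}
The independence is indexed by boundary edges, not by explored vertices as in your scheme.

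For (b) the paper coarse-grains. Using $\chi_\mu(\alpha)<\infty$ from (a), it chooses $K$ with $\mathbb E|S_K(\alpha)|<1$ for the shells in zoo distance. It then dominates the cluster by a subcritical weighted Galton--Watson process whose block weights have exponential moments by \eqref{eq:zexp}. Some multi-step block of this kind is unavoidable: as the star example shows, the one-step mean can exceed $1$ on part of the subcritical phase.
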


\begin{remark}
      For the important model of random walk loop soup, i.e.\ $\mu$ as in \eqref{eq: loop soup intensity}, the coincidence of the critical thresholds on regular trees (Theorem \ref{thm:zsharp} \ref{item:coincidence}) was recently established by \textcite{makowiec2026criticalcurvelooppercolation}, albeit with a technique that is different from our argument. Previously, on the Euclidean lattice, Chang and Sapozhnikov \cite[Theorem 1.7]{ChangSapo16} had shown that $\alpha_{\mathsf c}(\mathbb Z^d)$ and $\alpha_{\#}(\mathbb Z^d)$ exhibit the same first order behavior as $d \to \infty$. Our Theorem \ref{thm:zsharp} \ref{item:coincidence} is a further indication that (at least) in the mean field regime one might expect the coincidence of these critical parameters also for more general $\mu$.
\end{remark}

A concept closely related to the proof of Theorem~\ref{thm:zsharp}, and
interesting in its own right, is sensitivity with respect to Bernoulli
enhancement; see e.g.\ \cite{MR2446487} for an overview of previous enhancement methods, mostly applied to Bernoulli percolation. Given $\delta\in(0,1]$ and a Poisson zoo with intensity function $\mu$,
let $\mathcal O_\delta$ be an independent Bernoulli edge percolation with edge
probability $\delta$. Define
\[
C_{\alpha,\delta}^\mu(x)
:=
\text{the cluster of }x\text{ in }\mathcal O_\alpha^\mu\cup\mathcal O_\delta.
\]
We use this terminology only for models with a genuine, non-trivial
percolation transition, i.e.\ $0<\alpha_{\mathsf c}^\mu<\infty$.

For such a zoo, we say that it is \emph{$\delta$-sensitive with respect to
Bernoulli enhancement} if there exists $\alpha<\alpha_{\mathsf c}^\mu$ such
that
\begin{equation} \label{eq: enhancement}
\mathbb P \big(|C_{\alpha,\delta}^\mu(x)|=\infty\big)>0.
\end{equation}
In words, opening independent Bernoulli edges with probability $\delta$ can
push the model across its percolation threshold. The zoo is
\emph{infinitesimally sensitive with respect to Bernoulli enhancement} if it is
$\delta$-sensitive with respect to Bernoulli enhancement for every
$\delta\in(0,1]$.

\begin{remark}
There are two related, but conceptually different ways in which additional randomness is used to compare critical parameters.
\begin{enumerate}
    \item
The origin of using enhancing configurations is the theory of
\emph{essential enhancements} introduced by Aizenman and Grimmett
\cite{AizenmanGrimmett91}; see also \cite{MR2446487,BalisterBollobasRiordan14}.
In that setting one starts from, say, independent Bernoulli percolation and adds a
local monotone rule, driven by additional independent variables. The enhancement
is called essential if, at the deterministic level, activating the rule can turn
a configuration with no bi-infinite open path into one with such a path; the
main conclusion is then that the critical point is shifted strictly. The cable
system enhancement considered here has a similar monotone flavor, but it is not
literally an essential enhancement in this sense: the added connectivity comes
from a continuum Poisson ensemble and may be dependent and non-local when viewed
from the discrete graph. We therefore use {\em enhancement} in the broader sense
of an additional source of connectivity, and formulate the quantitative
strictness question below as {\em sensitivity to Bernoulli enhancement}.

\item
A different viewpoint is the direct coupling viewpoint from ordinary Bernoulli
percolation. Here one constructs two percolation configurations on the same
probability space, for example by obtaining the larger configuration from the
smaller one through an independent sprinkling of additional open edges, or by
obtaining the smaller one from the larger one through thinning. The comparison is
then made directly at the level of clusters or exploration processes: the added
edges may connect large finite clusters and create percolation, while deleting
edges may break an infinite cluster. This is the standard monotone coupling
picture underlying many classical comparisons of critical probabilities; see,
for instance, Kesten's book \cite{Kesten82}. Recent examples
include strict-inequality results for random-interchange-type models
\cite{Muhlbacher21,KLM25,betz2024loop,taggi2023essential}, strict monotonicity for independent long-range
percolation \cite{BM25}, and the coupling approach to stochastic
domination and graph fibrations in \cite{MPR25}. Our approach belongs to this second category.   \end{enumerate}
\end{remark}

As an easy example, if Bernoulli percolation on $G$ has critical parameter strictly
below one (see e.g.\ \cite{DuGoRaSeYa-20} for a recent overview of the current
state of the art), then the Poisson zoo corresponding to Bernoulli percolation
is infinitesimally sensitive in this sense. Also, any such zoo is
$\delta$-sensitive whenever $\delta\in(0,1]$ is already above the Bernoulli percolation
threshold. These simple examples make infinitesimal sensitivity look natural.
Note however that such sensitivity is not automatic: for
the killed half-plane loop soup discussed in
Remark~\ref{rem:loop soup-threshold-discussion}\ref{item:half-plane-no-sensitivity},
the cable system enhancement does not lower the critical point. 

Our second main result proves the infinitesimal sensitivity for Poisson zoos on the
$d$-regular tree, $d\ge3$, under the finite-overlap assumption.

\begin{theorem}
\label{thm:zenhan}
Let $d\geq3$ and set $G=\mathbb T_d$. Consider a Poisson zoo on $G$ with
intensity function $\mu$, and assume that \eqref{eq: finite C_ast} holds.
Assume moreover that the zoo is
non-degenerate, in the sense that $\mu(A)>0$ for some animal $A$ with
$E(A)\ne\varnothing$. Then the zoo is infinitesimally sensitive with respect to
Bernoulli enhancement.
\end{theorem}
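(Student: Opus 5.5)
We argue through Theorem~\ref{thm:zsharp}\ref{item:coincidence}: since $\alpha_{\mathsf c}^\mu=\alpha_\#^\mu$, the susceptibility $\chi_\mu(\alpha)$ diverges as $\alpha\uparrow\alpha_{\mathsf c}^\mu$. We first check that $0<\alpha_{\mathsf c}^\mu<\infty$, so the notion of sensitivity is meaningful. The lower bound follows from \eqref{eq: finite C_ast}: a branching-type exploration in which each explored vertex spawns the vertices of the animals containing it is dominated by a process with mean offspring at most $\alpha C_\ast(\mu)$, so $\chi_\mu(\alpha)<\infty$ for small $\alpha$. The upper bound uses non-degeneracy. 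Fix an animal $A$ with an edge $e\in E(A)$ and $\mu(A)>0$. By invariance, every edge of $\mathbb T_d$ is covered with probability at least $1-\mathrm e^{-\alpha\mu(A)}$, and the coverage events for edges far apart can be made independent by keeping only translates of $A$. This dominates a Bernoulli percolation on a sparser tree whose parameter tends to $1$ as $\alpha\to\infty$, which gives percolation.

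Now fix $\delta\in(0,1]$. The plan is a one-generation renormalisation of the enhanced model along the tree. Let $o$ be the root and $e=\{o,o'\}$ an edge. Cut $e$, and let $\mathbb T^{+}$ be the branch containing $o'$. The zoo cluster of $o'$ restricted to animals inside $\mathbb T^{+}$ has expected size comparable to $\chi_\mu(\alpha)$, up to the effect of animals crossing $e$. We control that effect using \eqref{eq: finite C_ast} and the fact that $\mathbb T_d$ is a tree, which makes the boundary contribution finite. Because $\chi$ is unbounded near $\alpha_{\mathsf c}^\mu$, we choose $\alpha<\alpha_{\mathsf c}^\mu$ so that the expected number of boundary vertices of the half-cluster is at least $K/\delta$ for a large $K$. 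Every vertex $v$ of the cluster has $d-2$ or more children edges leaving the cluster. Each of these is opened independently by $\mathcal O_\delta$ with probability $\delta$ and leads to a fresh subtree.

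Define a multi-type Galton--Watson-like exploration. A generation consists of the zoo cluster of a root inside its subtree, followed by the Bernoulli edges leaving that cluster. The mean number of new roots is at least $\delta(d-2)\,\mathbb E|C|$, minus corrections, and this exceeds $1$.

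The main obstacle is the lack of independence across generations. Animals can straddle several subtrees, and the clusters of different new roots are correlated with the explored region through the conditioning on closed animals at the cluster boundary. We would handle this in three steps:
\begin{enumerate}
\item Restrict each new generation to animals entirely contained in the new subtree beyond the Bernoulli edge. This is monotone, so it only lowers connectivity.
\item Use the tree structure: animals inside disjoint subtrees are independent Poisson variables, so the restricted clusters are i.i.d.
\item Show that restricting to a subtree costs only a bounded factor in the susceptibility. Concretely, $\chi^{\mathrm{sub}}(\alpha)\ge c\,\chi_\mu(\alpha)-C$, obtained from \eqref{eq: finite C_ast} by a BK-type or tree-decomposition bound. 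Alternatively, by symmetry, at least one of the $d$ branches at $o$ carries a $1/d$ fraction of $\mathbb E|C|$ after removing the animals through $o$.
\end{enumerate}
Since the restricted clusters are i.i.d., the exploration dominates a genuine Galton--Watson process with mean above $1$ once $\chi^{\mathrm{sub}}(\alpha)\delta(d-2)>1$. That process survives with positive probability, which yields \eqref{eq: enhancement} for this $\alpha<\alpha_{\mathsf c}^\mu$.
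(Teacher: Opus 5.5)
\textbf{Genuine gap: your step~3, the lower bound on the forward cluster.} Your architecture matches the paper's. You use $\alpha_{\mathsf c}^\mu=\alpha_\#^\mu$, explore clusters built only from animals inside disjoint forward subtrees (i.i.d.\ by Poisson restriction), and let the Bernoulli edges on their forward boundaries drive a supercritical Bienaym\'e--Galton--Watson process. The gap is step~3, which is the heart of the proof. You need the forward cluster $C^{\mu,+,e}_\alpha(y)$, for $e=(\0,y)$, to have expected size tending to infinity as $\alpha\uparrow\alpha_\#^\mu$. Neither suggested justification gives this.

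BK and $C_\ast(\mu)$ only give \emph{upper} bounds on connection probabilities. Decomposing through the occupied animals containing $\0$ yields only $\chi_\mu(\alpha)\le 1+\alpha\sum_{u\ne\0}W_\mu(\0,u)\,\mathbb E|C'(u)|$, where $C'(u)$ is the cluster of $u$ built from animals avoiding $\0$. For $u$ far from $\0$ one has $\mathbb E|C'(u)|\approx\chi_\mu(\alpha)$, so this estimate is circular.

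The symmetry heuristic fails because $C^\mu_\alpha(\0)\cap T_e^+$ is in general much larger than the forward cluster of $y$. Occupied animals through $\0$ reach deep into $T_e^+$, and the cluster continues from there. Nothing in either heuristic distinguishes $d\ge3$ from $d=2$, and on $\mathbb Z=\mathbb T_2$ the claim is false. If $\mu$ gives mass $m$ to each interval with $L$ edges and $\alpha mL=\varepsilon$ is small, the forward cluster has expectation $e^{\varepsilon}$, while $\chi_\mu(\alpha)\ge\varepsilon L/2$.

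\textbf{The missing idea: Lemma~\ref{lem:zbfact}.} An edge $f=(u,v)$ directed away from $\0$ lies in $\partial_E C^\mu_\alpha(\0)$ if and only if two things hold: $\0$ is connected to $u$ using only animals inside $T_f^-$, and no occupied animal contains $f$. These events depend on disjoint animal classes, so they are independent, and the second has probability $e^{-\alpha\lambda_\mu}$. A tree automorphism carries $(T_f^-,u)$ to $(T_e^+,y_e)$. Count the $d(d-1)^k$ such edges with $u$ at distance $k$ from $\0$ against the $(d-1)^k$ vertices at distance $k$ from $y_e$ in $T_e^+$. This gives $\mathbb E|\partial_E C^\mu_\alpha(\0)|=d\,e^{-\alpha\lambda_\mu}\,\mathbb E|C^{\mu,+,e}_\alpha(y_e)|$.

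Combine this with $|\partial_E C|=(d-2)|C|+2$, which is where $d\ge3$ enters. The result is $\mathbb E|C^{\mu,+,e}_\alpha(y_e)|=\tfrac1d e^{\alpha\lambda_\mu}\bigl((d-2)\chi_\mu(\alpha)+2\bigr)$, so the forward-boundary mean $b^+_\mu(\alpha)$ diverges.

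\textbf{Smaller points.}
\begin{enumerate}
\item Divergence of $\chi_\mu$ as $\alpha\uparrow\alpha_\#^\mu$ does not follow from $\alpha_{\mathsf c}^\mu=\alpha_\#^\mu$ alone. It needs openness of $\{\chi_\mu<\infty\}$, which the paper gets from the Aizenman--Newman-type differential inequality (Lemma~\ref{lem:zopdiv}).
\item A cluster vertex need not have $d-2$ children outside the cluster. Only the aggregate identity $|\partial_E^+C^{\mu,+,e}_\alpha(y)|=(d-2)|C^{\mu,+,e}_\alpha(y)|+1$ holds.
\item For $\alpha_{\mathsf c}^\mu<\infty$ you need a concrete independent family. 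For example, map a leaf edge of $A$ onto each edge, with the rest of $A$ pointing away from $\0$. This gives pairwise disjoint animal classes, hence an i.i.d.\ Bernoulli field with parameter tending to $1$.
\end{enumerate}
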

The main reason this result is relevant in our setup is that it directly entails Theorem \ref{thm: loop soup} \ref{item:treeStrict} below (and in spirit also
is lurking behind the proof of Theorem \ref{thm:zsharp}
\ref{item:coincidence}).  The proof of Theorem \ref{thm:zenhan} will be postponed to Section \ref{sec:trees}.

\section{Discrete and metric graph loop soup} \label{sec:LS}

The cable system, or metric graph loop soup, was introduced in \cite{Lupu2016}. Intuitively, it corresponds to replacing the above continuous-time random walk loops by Brownian loops on the continuum graph obtained by gluing nearest neighbors of $G=(V,E)$ with line segments; we denote the resulting graph by $\widetilde G$ and its Brownian loop space by $\widetilde{\mathcal L}$.
It is interesting in its own right and serves as a comparison to the Markovian loop soup. To keep the loop-space measure distinct from the graph-animal intensity $\mu$, let $\widetilde\nu$ denote the metric-graph loop measure
\begin{equation} \label{eq:metricIntensity}
\widetilde\nu(B)
:=
\int_{\widetilde G}\int_0^\infty
\frac1t\,\widetilde{\mathbb P}^{\,t}_{x,x}(B)\,
\widetilde p_t(x,x)\,{\mathrm d}t\,\lambda({\mathrm d}x),
\qquad B\subseteq\widetilde{\mathcal L},
\end{equation}
where $\widetilde{\mathbb P}^{\,t}_{x,x}$ is the Brownian bridge probability measure on $\widetilde G$ of duration $t$, starting and ending at $x$, $\widetilde p_t$ is the heat kernel on $\widetilde G$, and $\lambda$ is its length measure, normalized so that each cable has length $1/2$. The metric-graph loop soup at level $\alpha$ is the Poisson point process with intensity $\alpha\widetilde\nu$. We refer to \cite{Lupu2016, MR4421175} and references therein for further detail.

There is a natural projection from Brownian loops on the metric graph to
(possibly empty) discrete loops, obtained by recording successive visits to the
vertices of the underlying graph (and adding the closing graph step when
needed). Conversely, one can recover the metric-graph loop soup from the
discrete soup plus additional randomness. We refer to \cite{Lupu2016} and
references therein for further details.

Recall that metric loops with non-empty discrete edge trace are called fundamental.
Given a cable system configuration of the loop soup, two graph vertices are connected if they can
be joined by a path in the metric graph covered by the union of finitely many Brownian
loops.

Some relevant facts about the cable system are the following:
\begin{itemize}
    \item Under the discrete projection, the point process of fundamental loops
    has intensity $\alpha\nu$ and hence is precisely the non-trivial Markovian
    loop soup with the same parameter $\alpha$.

    \item On $\mathbb T_d$ (and likewise on $\mathbb Z^d$), the cable system
    contains an independent Bernoulli enhancement of the discrete soup. Indeed,
    write $I_e$ for the cable corresponding to $e$ and choose
    $0<\eta<1/4<r<1/2$. For an oriented edge $(x,e)$, let
    $\mathscr H_{x,e}$ be the class of non-fundamental metric loops which hit
    $x$, have an excursion in $I_e$ reaching distance $r$ from $x$, and have no
    excursion of height $\eta$ in any other cable incident to $x$. These classes
    are pairwise disjoint, and the one-dimensional Brownian excursion
    decomposition at $x$ gives
    \[
    h_d:=\widetilde\nu(\mathscr H_{x,e})\in(0,\infty),
    \]
    independently of $(x,e)$. Hence their level-$\alpha$ counts are independent
    Poisson variables, also independent of the fundamental soup. The variables
    \[
    B_e:=\mathbbm 1\{N_{\mathscr H_{x,e}}\ge1,
                         N_{\mathscr H_{y,e}}\ge1\},
    \qquad e=\{x,y\},
    \]
    are therefore iid Bernoulli with parameter
    \[
    q_{\mathrm{cab}}(\alpha)=\bigl(1-e^{-\alpha h_d}\bigr)^2>0.
    \]
    On $\{B_e=1\}$, the two loop traces cover $I_e$ from its two endpoints
    beyond its midpoint, and hence cover the whole cable. Thus cable
    connectivity dominates the discrete soup enhanced by this independent
    Bernoulli field.
    
    \item For the cable system, the critical parameter for the percolation problem is
    \begin{equation}\label{eq:alphaTilde}
    \widetilde \alpha_{\mathsf c} := \widetilde \alpha_{\mathsf c}(G) := \sup \big \{ \alpha \geq 0: \mathbb P(|\widetilde C_\alpha(\0)| =
    \infty) = 0 \big\},
    \end{equation}
with $\widetilde C_\alpha(\0)$ denoting the connected component of the origin.

It turns out that
    $\widetilde \alpha_{\mathsf c}=1/2$ for a wide range of vertex-transitive graphs. This follows
    from \cite{Lupu2016,ChangDuLi24} and is related to the coupling with the Gaussian free field.
\end{itemize}

A very interesting and natural question (which is also an open question of Cai and Ding \cite{cai2026gapclusterdimensionsloop}, see end of Section 1 therein) is
whether
the
percolation threshold of the Markovian loop soup on $\mathbb Z^d$ in low dimensions is strictly
larger than the cable system threshold $1/2$. By the facts above, a sufficient
condition for such a gap is the sensitivity of the random walk loop
soup to Bernoulli enhancement of small enough parameter (which in particular is satisfied if it is infinitesimally sensitive). On $\mathbb T_d$, which should be thought of as a test lab for the general mean field regime, Theorem~\ref{thm:zenhan}
therefore gives different thresholds for the loop soup and the cable system
once we check \eqref{eq: finite C_ast} for the loop soup (this is done in the proof of Theorem \ref{thm: loop soup}
\ref{item:treeStrict} below). A limitation of the
proof of Theorem~\ref{thm:zenhan} is that it relies heavily on the recursive
structure of $\mathbb T_d$, so it is not immediate how to extend the argument to
$\mathbb Z^d$. Moreover, for $3\le d\le4$, it is known that for the random set
\begin{equation} \label{eq:oneStepSet}
C_\alpha^\mu(\0,1)
:=
\left\{
v\in \mathbb Z^d:\ \text{there exists }A\in\mathfrak A
\text{ with }N_A^\alpha\ge1,\ \0\in V(A),\ v\in V(A)
\right\}.
\end{equation}
of vertices reachable from $\0$ using a single occupied animal one has \(\mathbb E(|C_\alpha^\mu(\0,1)|) =\infty\) once $\alpha > 0$; see Lemma 5.1 of
\cite{ChangSapo16}. Referring back to the discussion below
\eqref{eq: finite C_ast}, write
$\mathcal A_{\0}^{\alpha}:=\{A\in\mathfrak A:\ \0\in V(A),\ N_A^\alpha\ge1\}$.
Then
\[
\mathbb E[|C_\alpha^\mu(\0,1)|] = \mathbb E
\Big[\Big| \bigcup_{A \in \mathcal A_{\0}^{\alpha}} V(A) \Big|\Big] \leq
\mathbb E \Big[ \sum_{A \in \mathcal A_{\0}^{\alpha}} |V(A)| \Big],
\]

Thus $\mathbb E[|C_\alpha^\mu(\0,1)|] =\infty $ implies
$C_\ast(\mu)=\infty$, so the above approach cannot apply to $\mathbb Z^d$ for
$d=3,4$. Nevertheless, we can show the strict inequality for the percolation thresholds for
the random walk loop soup and the cable system on $\mathbb Z^d$ for $d\ge5$ by
more direct means. We
summarize this discussion as follows.

\begin{theorem} \label{thm: loop soup}
\begin{enumerate}
\item \label{item:strict} For  $G=\mathbb Z^d$ and
$d\ge5$
we have the strict inequality of critical parameters when comparing the discrete graph with its metric graph version:
\begin{equation}\label{eq:strict}
\frac12 = \widetilde \alpha_{\mathsf c}(G) < \alpha_{\#}(G)\,  \big(\leq \alpha_{\mathsf c}(G)\big).
\end{equation}

\item \label{item:treeStrict} On $\mathbb T_d$, $d \geq 3$, the random walk loop soup
is
infinitesimally sensitive to Bernoulli enhancement; a fortiori, \eqref{eq:strict} holds true for $G=\mathbb T_d$ as well.
\end{enumerate}
\end{theorem}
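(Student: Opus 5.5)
For part \ref{item:treeStrict} I would reduce everything to Theorem~\ref{thm:zenhan}. It suffices to check \eqref{eq: finite C_ast} for the loop-soup intensity \eqref{eq: loop soup intensity} on $\mathbb T_d$, to check non-degeneracy, and to check $0<\alpha_{\mathsf c}^\mu<\infty$.

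\emph{Non-degeneracy.} This is clear, since the loop $x\to y\to x$ across one edge has positive $\nu$-mass.

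\emph{Finite overlap.} I would use $|V(\operatorname{tr}\gamma)|\le |\gamma|$, where $|\gamma|$ is the number of jumps. Then
\[
C_\ast(\mu)\le \int |\gamma|\,\mathbbm 1\{\0\in\gamma\}\,\nu(\mathrm d\gamma)\le \sum_{n\ge2}\frac{n}{n}\,n\,P_{\0}(S_n=\0),
\]
where $S$ is the discrete skeleton. The middle step uses the standard formula: the rooted loop measure of $n$-step loops at $\0$ is $\tfrac1n P_{\0}(S_n=\0)$, and a loop visits at most $n$ vertices, so the union bound over possible roots gives the extra factor $n$. On $\mathbb T_d$ with $d\ge3$ the walk has spectral radius $\rho=2\sqrt{d-1}/d<1$, so $P_{\0}(S_n=\0)\le\rho^n$. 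The series converges and even \eqref{eq:zexp} holds.

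\emph{Nontrivial threshold.} Finiteness of $\alpha_{\mathsf c}^\mu$ follows since at large $\alpha$ the one-edge loops alone give Bernoulli percolation with parameter close to $1$. Positivity follows by comparing the exploration with a branching process whose mean offspring is $\alpha C_\ast(\mu)<1$ for small $\alpha$.

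\emph{Conclusion.} Theorem~\ref{thm:zenhan} now gives infinitesimal sensitivity. Choose $\delta=q_{\mathrm{cab}}(1/2)$, the Bernoulli enhancement contained in the cable system. Note that $q_{\mathrm{cab}}$ is increasing in $\alpha$, so the enhancement at level $\alpha\ge1/2$ dominates $\delta$. Sensitivity then yields $\alpha<\alpha_{\mathsf c}$ with $\mathbb P(|C^\mu_{\alpha,\delta}|=\infty)>0$. If $\alpha_{\mathsf c}\le 1/2$ there is nothing to prove, since then $\alpha_{\mathsf c}\neq \tfrac12$ would be trivial only in the wrong direction. So I would argue directly: for every $\alpha\ge \widetilde\alpha_{\mathsf c}=1/2$ we have $\widetilde C_\alpha\supseteq C^\mu_{\alpha,\delta}$, but this alone gives $\widetilde\alpha_{\mathsf c}\le\alpha$.

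The cleaner route is as follows. Theorem~\ref{thm:zsharp} gives $\alpha_\#=\alpha_{\mathsf c}$. Suppose for contradiction that $\alpha_{\mathsf c}\le 1/2$. Let $q_0:=q_{\mathrm{cab}}(\alpha_{\mathsf c}/2)$. Sensitivity at $\delta=q_0$ should be applied at intensities just below $\alpha_{\mathsf c}$, so the argument needs the quantitative form of the proof of Theorem~\ref{thm:zenhan}: for every $\delta>0$ there is $\epsilon(\delta)>0$ such that $\alpha_{\mathsf c}-\epsilon$ with enhancement $\delta$ percolates. Combined with the cable domination at intensity $\alpha_{\mathsf c}-\epsilon\ge \alpha_{\mathsf c}/2$, this shows the cable soup percolates strictly below $\alpha_{\mathsf c}$. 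Hence $\widetilde\alpha_{\mathsf c}<\alpha_{\mathsf c}$, which gives \eqref{eq:strict} using $\widetilde\alpha_{\mathsf c}=1/2$ and $\alpha_\#=\alpha_{\mathsf c}$.

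For part \ref{item:strict} on $\mathbb Z^d$, $d\ge5$, the tree argument is unavailable, so I would use a one-generation comparison. Set $m(\alpha):=\mathbb E|C^\mu_\alpha(\0,1)|$. For $d\ge5$, $m(\alpha)\le \alpha C_\ast(\mu)<\infty$, because $\sum_n n\,P(S_{2n}=0)\asymp\sum n^{1-d/2}<\infty$.

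The plan is to show $\chi(\alpha)<\infty$ for some $\alpha>1/2$. To do this I would use the BK-type tree-graph bound for Poisson zoos, in which a chain of disjointly occurring animals gives $\chi\le\sum_k m_\ast^k$. Here $m_\ast$ is a slightly corrected one-step quantity, counting new vertices reached through an animal while excluding the vertex it was entered from. This requires the explicit estimate $m_\ast(1/2)<1$, which can be computed via Green's function quantities $G(0,0)$ and return probabilities. Since the walk on $\mathbb Z^d$, $d\ge5$, has small return probability, the loop measure mass should be small. Continuity of $m_\ast$ in $\alpha$ then gives $\alpha_\#>1/2$.

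\textbf{Main obstacle.} The hardest step is this numerical bound $m_\ast(1/2)<1$, which must hold uniformly in $d\ge5$. At $d=5$ I expect to need careful bookkeeping of the small-loop contributions and an explicit bound on $G(0,0)$ for $\mathbb Z^5$.
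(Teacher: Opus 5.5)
Your part (b) is sound. Reducing to Theorem~\ref{thm:zenhan} is what the paper does. Your check of \eqref{eq: finite C_ast} via $|V(\operatorname{tr}\gamma)|\le|\gamma|$ and $\nu(\{\gamma\ni\0,\ |\gamma|=n\})\le P_{\0}(S_n=\0)\le\rho^n$ is a valid alternative to the paper's two-point identity $W_\mu(u,v)=-\log(1-F(u,v)F(v,u))$ with $F(u,v)=(d-1)^{-\operatorname{dist}(u,v)}$, and yours even gives \eqref{eq:zexp}. In the final threshold comparison you need neither a ``quantitative form'' of sensitivity nor the contradiction hypothesis $\alpha_{\mathsf c}\le1/2$. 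The enhanced model is monotone in the base intensity. So if level $b<\alpha_{\mathsf c}$ with Bernoulli enhancement of parameter $q_{\mathrm{cab}}(\alpha_{\mathsf c}/2)$ percolates, then so does level $c=\max\{b,\alpha_{\mathsf c}/2\}<\alpha_{\mathsf c}$, and the cable soup at level $c$ dominates this.

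The genuine gap is in part (a). Your reduction is the paper's: domination by a BGW process (or a BK tree-graph bound) with the one-generation mean $M(\alpha)$, then continuity in $\alpha$. But the step that carries the theorem, $M(1/2)<1$ for every $d\ge5$, is only announced. ``Small return probability'' does not prove it. In $d=5$ the relevant series $\sum_{k}(k+1)p_k(0)$, with $p_k(0)$ the $k$-step return probability, converges only like $\sum m^{-3/2}$, so a rough estimate will not do. Your plan also has no mechanism for uniformity in $d$.

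You can bound the one-generation mean either as the paper does or with your own estimate:
\begin{itemize}
\item The paper uses the exact formula $M(\alpha)=\sum_{v\ne\0}\bigl[1-(1-q(v)^2)^\alpha\bigr]$, with $q(v)=\mathcal G(\0,v)/\mathcal G(\0,\0)$, which comes from the two-point loop mass $-\log(1-q(v)^2)$. This gives $M(1/2)\le\sum_{v\ne\0}q(v)^2\le S_d-1$, where $S_d=\sum_x\mathcal G(0,x)^2=\sum_{k\ge0}(k+1)p_k(0)$.
\item Your own bound $M(\alpha)\le\alpha\sum_n n\,p_n(0)\le\alpha(S_d-1)$ would serve equally well.
\end{itemize}

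What remains missing are the two non-routine ingredients:
\begin{itemize}
\item Monotonicity $S_d\le S_5$ for all $d\ge5$. The paper gets this from $S_d=\int_0^\infty te^{-t}I_0(t/d)^d\,\mathrm dt$ and log-convexity of the Bessel function $I_0$, so that $d\mapsto I_0(t/d)^d$ is decreasing.
\item A rigorous bound $S_5<2$. The paper obtains it from an exact rational evaluation $1+\sum_{m\le307}(2m+1)p^{(5)}_{2m}(0)<1.88939$, plus the explicit Ball--Sterbenz tail bound, which contributes less than $0.0457$.
\end{itemize}
Without these two steps your argument does not establish $\widetilde\alpha_{\mathsf c}<\alpha_\#$ in any single dimension, let alone for all $d\ge5$.
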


\begin{remark}
\label{rem:loop soup-threshold-discussion}
\begin{enumerate}
\item Let us note here that heuristically the results of Theorems \ref{thm:zenhan} and  \ref{thm: loop soup} \ref{item:treeStrict} concerning $\mathbb T_d$ are not overly surprising. Indeed, on $\mathbb T_d$ the probability that a random walk loop visits a set of cardinality $n$ is roughly decaying exponentially in $n$; as a consequence, one has strongly decaying correlations which might be somewhat indicative of Theorems \ref{thm:zenhan} and  \ref{thm: loop soup} for $\mathbb T_d$.

The situation for $\mathbb Z^d$ is more intricate.
The probability of long random walk excursions only decays polynomially and our approach to establishing the strict inequality is based on an ad-hoc approach specifically tailored to the loop soup.

\item
\label{item:half-plane-no-sensitivity}
On the one hand, note that \cite[Thm.\ 1.7]{ChangSapo16} implies that $\alpha_{\mathsf c}(\mathbb Z^d) \to \infty$  as $d \to \infty$ and that $\widetilde \alpha_{\mathsf c}(\mathbb Z^d)=1/2$ for all $d \ge 3$ \cite{ChangDuLi24}. As a consequence, one can infer that $\widetilde \alpha_{\mathsf c}(\mathbb Z^d) < \alpha_c(\mathbb Z^d)$ for all $d$ large enough. The above Theorem \ref{thm: loop soup} \ref{item:strict} makes this quantitative in the dimension via establishing that this strict inequality holds from $d\ge 5$ onwards.

On the other hand, let $\mathbb H=\mathbb Z\times\mathbb N_*$, and consider the
loop soup associated with the simple random walk on $\mathbb Z^2$ killed on
leaving $\mathbb H$; equivalently, loops are nearest-neighbor loops contained
in $\mathbb H$, with Lupu's normalization giving a rooted loop of length $2n$
mass $(2n)^{-1}4^{-2n}$. This is the same graph-trace normalization as the one
obtained from the continuous-time walk after integrating out holding times.
Lupu \cite{Lupu2016a}, building on the critical non-percolation result from
\cite{Lupu2016}, proved that this half-plane loop soup has critical intensity
$1/2$; in particular, it percolates for every $\alpha>1/2$.

For the corresponding cable system over the killed half-plane, the threshold is
also $1/2$ in the same parametrization. Indeed, the critical
non-percolation at $\alpha=1/2$ follows from the metric graph Gaussian free
field result quoted above \cite{MR4421175}, via Lupu's coupling
\cite{Lupu2016}, while the supercritical side follows from the discrete
half-plane percolation result and the domination of the discrete loop soup by
the cable system loop soup. Thus, on $\mathbb H$, the discrete and cable system
thresholds agree:
\[
\widetilde \alpha_{\mathsf c}(\mathbb H)
= \alpha_{\mathsf c}(\mathbb H)
= \frac12.
\]
This half-plane example shows that the cable system enhancement need not lower
the critical point. The killing at the boundary is part of this example: it is
what makes the half-plane Green function finite and puts the model in the
metric graph/Gaussian-free-field framework. Without this killing one is back on
the full plane, where Lupu notes that the loop soup forms a single cluster for
every positive intensity, so there is no analogous non-trivial threshold to
compare. It would be interesting to develop a more general theory that separates
sensitive from non-sensitive loop soup or zoo-type models; we leave this for
future work.

\item
As a direct follow-up to the previous item, an interesting question is to understand whether $d\ge5$ is optimal or whether the strict inequality of \eqref{eq:strict} is valid for $G= \mathbb Z^3$ or $G= \mathbb Z^4$ already.
Indeed,  Chang and Sapozhnikov \cite{ChangSapo16} show that in  contrast to dimensions $d \ge 5$, in dimensions $3$ and $4$ the one-arm probability decays at a slower rate than it would if it was realized by a single large loop, which is an indication that 
connections are realized in a structurally different way in $G= \mathbb Z^3$ and $G= \mathbb Z^4$ when compared to $\mathbb Z^d$, $d \ge 5$ (cf.\ \cite{MR5067664}) From a different point of view the recent results of \cite{cai2026gapclusterdimensionsloop} also suggest that small (and mesoscopic) loops do contribute in the putative scaling limit of the random walk loop soup in $\mathbb Z^3$.


\item 

Analogous results had been
obtained for the Gaussian free field, where one knows that the critical
parameter for the percolation problem of the excursion sets of the metric graph
process satisfies $\widetilde{h}_*(G) = 0 $ \cite{Lupu2016,MR4421175} while the
critical parameter for the respective discrete problem satisfies $h_*(G) > 0$
\cite{DPR18,MR4914069} for $G = \mathbb Z^d$, and also more general graphs
$G$ with polynomial growth and regular Green function decay.
It also follows e.g.\ from \cite{DuGoRoSe-20} that the transition from finite expectation of the size of the cluster of the origin happens at the same threshold as the onset of percolation.

\end{enumerate}
\end{remark}

\begin{proof}[Proof of Theorem~\ref{thm: loop soup}]
\emph{Part a).} The proof relies on a natural comparison argument to a Bienaym\'e--Galton--Watson tree that has been used before, e.g.\ in
\cite{ChangSapo16}. The underlying idea is that if the one-animal generation has mean strictly smaller than
one, then the full cluster is dominated by a subcritical Bienaym\'e--Galton--Watson process.
More precisely, for \(k\ge0\) let \(C_\alpha^\mu(\0,k)\) be the vertex set of
the part of the cluster that can be reached from \(\0\) using a chain of at
most \(k\) different occupied animals, consecutive members of which have
intersecting vertex sets, with
\(C_\alpha^\mu(\0,0)=\{\0\}\). If
\[
M(\alpha):=
\mathbb E\big[|C_\alpha^\mu(\0,1)\setminus\{\0\}|\big]<1,
\]
then \(\chi_\mu(\alpha)<\infty\), and hence
\[
\alpha \leq \alpha_{\#}\le \alpha_{\mathsf c}.
\]

Indeed,
\[
V(C^\mu_\alpha(\0))=\bigcup_{k=0}^\infty C_\alpha^\mu(\0,k).
\]
Setting
\[
\Delta_k:=C_\alpha^\mu(\0,k)\setminus C_\alpha^\mu(\0,k-1),
\]
then we observe that discarding possible overlaps only makes the exploration smaller, and
the number \(|\Delta_k|\) is stochastically dominated by the sum of
\(|\Delta_{k-1}|\) independent copies of
\(|C_\alpha^\mu(\0,1)\setminus\{\0\}|\). Thus the generation sizes are dominated
by a Bienaym\'e--Galton--Watson process with offspring mean \(M(\alpha)<1\). The total
progeny has finite expectation, and therefore \(\chi_\mu(\alpha)<\infty\).

Since -- as mentioned above -- the cable system critical value is
\(\widetilde \alpha_c(\mathbb Z^d)=1/2\), \(d\ge3\), it remains to prove that
\begin{equation} \label{eq:subcritical condition}
M(\alpha):=\mathbb E\big[|C_\alpha^\mu(\0,1)\setminus\{\0\}|\big] < 1
\qquad
\text{ for some } \alpha > 1/2.
\end{equation}
 
The proof of Proposition 5.1 of \cite{ChangSapo16} provides the equality
\begin{equation} \label{eq:expClusterSize}
M(\alpha)
=
\sum_{v \in V \setminus \{\0\}}
1 - \big( 1 - q_G(v)^2 \big)^\alpha,
\end{equation}
where
\[
q_G(v) := \frac{\mathcal G_G(\0,v)}{\mathcal G_G(\0,\0)},
\]
and where $\mathcal G_G(\0,v)$ is the Green function
of the simple random walk on the graph $G$. The formula
is true once the random walk is transient on $G$; see also Proposition 18, Chapter 4 of \cite{LeJanStFlour}.

We will now show that 
\begin{equation} \label{eq:M12}
M(1/2)<1 \text{ for }\mathbb Z^d \text{ when }d\ge5.
\end{equation}
This will finish the proof since
$\alpha \mapsto M(\alpha)$
is continuous in a neighborhood of $1/2$: indeed,
$M(1/2) < 1$ implies $\sum_{v\neq o}q_G(v)^2<\infty$,
because
$1-\sqrt{1-t}\ge t/2$ for $t\in[0,1]$. For $\beta$ in a compact neighborhood
of $1/2$ there is a constant $C<\infty$ such that
\begin{equation} \label{eq:betaBd}
1-(1-t)^\beta\le Ct,
\qquad 0\le t\le1.
\end{equation}

Thus the terms defining $M(\beta)$ are dominated by $Cq_G(v)^2$, which as shown above is
summable under \eqref{eq:M12}. Dominated convergence then gives the claimed continuity.

We now prove \eqref{eq:M12}. The strategy is to first establish
\begin{equation} \label{eq:dimIneq}
    S_d\le S_5 \text{ for all }d\ge5,
\end{equation}
and afterwards explicitly compute that $M(1/2) < 1$ in $\mathbb Z^5$.

For $\mathbb Z^d$ with $d \geq 5$, let
\[
B_d:=
\sum_{x\neq0}\left(\frac{\mathcal G_{\mathbb Z^d}(0,x)}
{\mathcal G_{\mathbb Z^d}(0,0)}\right)^2.
\]
Since $1-\sqrt{1-t}\le t$ due to \eqref{eq:betaBd}, by \eqref{eq:expClusterSize} it is enough to prove $B_d<1$.
Writing
\[
S_d:=\sum_{x\in\mathbb Z^d}\mathcal G_{\mathbb Z^d}(0,x)^2
\]
we have
\[
B_d=\frac{S_d}{\mathcal G_{\mathbb Z^d}(0,0)^2}-1
\le S_d-1.
\]
In order to demonstrate \eqref{eq:dimIneq}, for $k \in \mathbb R^d$ we set
\[
\widehat p_d(k):=\frac1d\sum_{j=1}^d\cos k_j,
\]
so Parseval's identity gives
\[
S_d=\frac{1}{(2\pi)^d}\int_{[-\pi,\pi]^d}
\frac{\mathrm{d}k}{(1-\widehat p_d(k))^2}.
\]
Using
\[
\frac{1}{(1-r)^2}=\int_0^\infty t e^{-t(1-r)}\,\mathrm{d}t,
\qquad r<1,
\]
and Tonelli's theorem, this becomes
\begin{equation}\label{eq:Sd}
S_d
=
\int_0^\infty t e^{-t}
\frac{1}{(2\pi)^d}\int_{[-\pi,\pi]^d}
\exp\{t\widehat p_d(k)\}\,\mathrm{d}k\,\mathrm{d}t.
\end{equation}
Define
\[
I_0(s):=\frac1{2\pi}\int_{-\pi}^{\pi}e^{s\cos u}\,\mathrm{d}u
\]
for the modified Bessel function. Since the inner integral in \eqref{eq:Sd} factorizes over the
coordinates, we obtain
\[
S_d=\int_0^\infty t e^{-t} I_0(t/d)^d\,\mathrm{d}t.
\]
The function $\log I_0$ is convex and vanishes at the origin; hence
$s\mapsto \log I_0(s)/s$ is increasing on $(0,\infty)$. It follows that
$d\mapsto I_0(t/d)^d$ is decreasing for each fixed $t>0$. Thus
\eqref{eq:dimIneq}
follows.

It remains to upper bound $S_5$ by a finite exact calculation followed by an
explicit tail estimate. Letting $p_n^{(5)}(0)$ be the $n$-step return probability of simple random
walk on $\mathbb Z^5$, we can then rewrite
\[
S_5=\sum_{n,m\ge0}p_{n+m}^{(5)}(0)
=1+\sum_{m\ge1}(2m+1)p_{2m}^{(5)}(0).
\]
Exact evaluation
(see Appendix \ref{sec:exComp} also) gives
\begin{equation}\label{eq:finite-sum-bound}
1+\sum_{m=1}^{307}(2m+1)p_{2m}^{(5)}(0)
<1.88939.
\end{equation}
For the remaining tail we use the explicit return-probability bound of
\cite[Proposition~1 and the paragraph following Remark~1]{BallSterbenz05}.
That bound applies when
$m\ge \frac74d^2(d+2)+1$; in dimension $d=5$, and for integer $m$, it gives
\[
p_{2m}^{(5)}(0)
\le
2\left(\frac{5}{4\pi m}\right)^{5/2},
\qquad m\ge308.
\]
Since $(2t+1)t^{-5/2}$ is decreasing, the remaining tail is bounded by
\[
2\left(\frac{5}{4\pi}\right)^{5/2}
\,\int_{307}^\infty (2t+1)t^{-5/2}\,\mathrm{d}t
<0.0457.
\]
As a consequence, and in combination with the previously established
\eqref{eq:dimIneq}, we get $S_d\le S_5<1.936$ for all $d\ge5$, and hence
$B_d\le S_d-1<0.936<1$.

\emph{Part b).} We apply Theorem~\ref{thm:zenhan}, so we only need to verify its assumptions for the specific case of loop soups. Let $\nu$ be the non-trivial Markov loop measure from \eqref{eq:discreteLoopMeasure}, for $G=\mathbb T_d$, and let $\mu=\operatorname{tr}_{\#}\nu$ be its graph-animal push-forward from \eqref{eq: loop soup intensity}. Then
$\mu$ is invariant and non-degenerate, since every edge belongs to the trace
of a two-step backtracking loop with positive $\nu$-measure. We claim that the finiteness condition
\eqref{eq: finite C_ast} holds for this $\mu$. To prove this, let \(x\in V\), denote by \(\P_x\) the law of \(X\) started at \(x\), and set
\[
\tau_v:=\inf\{t\geq0:X_t=v\}.
\]
For \(u\neq v\), define
\[
F(u,v):=\P_u(\tau_v<\infty).
\] The standard two-point
loop-measure identity (cf.\ Lemma 2.5 in \cite{ChangSapo16}) gives
\[
W_{\mu}(u,v)
=
\sum_{A\in\mathfrak A:\,u,v\in V(A)}\mu(A)
=
\nu\bigl(\{\gamma\in\mathcal L^{\mathrm{nt}}:\,
u,v\in V(\operatorname{tr}(\gamma))\}\bigr)
=
-\log\bigl(1-F(u,v)F(v,u)\bigr).
\]
On $\mathbb T_d$, if $n=\operatorname{dist}(u,v)$, then
$F(u,v)=F(v,u)=(d-1)^{-n}$. Since $(d-1)^{-2n}\le1/4$ for $n\ge1$,
\[
W_{\mu}(u,v)\le 2(d-1)^{-2n}.
\]
The diagonal term $W_{\mu}(u,u)$ is finite by transience; here it is essential
that $\nu$ excludes the zero-jump loops. Summing
over the sphere of radius $n$, whose size is $d(d-1)^{n-1}$ for $n\ge1$, gives
\[
\sum_{v\in V(\mathbb T_d)}W_{\mu}(u,v)<\infty.
\]
By transitivity this is uniform in $u$, and hence
\eqref{eq: finite C_ast} holds for $\mu$. Therefore Theorem~\ref{thm:zenhan} implies the statement of
Theorem~\ref{thm: loop soup} \ref{item:treeStrict}.

For completeness, the dependence of $q_{\mathrm{cab}}$ on the intensity causes no
problem in the asserted threshold comparison. Set
$a:=\alpha_{\mathsf c}(\mathbb T_d)/2>0$ and
$\delta:=q_{\mathrm{cab}}(a)$. Infinitesimal sensitivity gives some
$b<\alpha_{\mathsf c}(\mathbb T_d)$ for which the level-$b$ discrete soup,
enhanced by iid Bernoulli edges of parameter $\delta$, percolates. At the cable
level $c:=\max\{a,b\}<\alpha_{\mathsf c}(\mathbb T_d)$, the monotone Poisson
coupling contains both the level-$b$ fundamental soup and, independently, the
field extracted above at level $a$. Consequently
\[
\widetilde\alpha_{\mathsf c}(\mathbb T_d)
\le c<\alpha_{\mathsf c}(\mathbb T_d)=\alpha_{\#}(\mathbb T_d),
\]
where the last equality is Theorem~\ref{thm:zsharp}.

\end{proof}

\section{Analysis of Poisson zoos on trees}\label{sec:trees}

The goal of this section is to prove Theorems~\ref{thm:zsharp} and
\ref{thm:zenhan}. The finite-volume part of Subsection \ref{sec:di} is stated for general transitive
graphs; the subsequent exploration (see Subsection \ref{sec:2124}) argument is specialized to
$G=\mathbb T_d$, $d\ge3$.

\subsection{Finite-volume differential inequality and openness} \label{sec:di}

The finite-volume estimate below is the analytic input. It controls how fast
the finite-volume susceptibility can grow as the intensity parameter increases.
The overlap constant $C_*(\mu)$ naturally appears as the quantity which bounds the total possible
influence of one cluster on another. The results in this particular subsection are
not specific to trees and hold on any transitive graph $G$.

\begin{lemma}[Monotone coupling]\label{lem:zmoncp}
There is a coupling of the Poisson zoos
\((\mathcal O^\mu_\alpha)_{\alpha\ge0}\) such that
\[
\mathcal O^\mu_{\alpha_1}\subseteq \mathcal O^\mu_{\alpha_2}
\qquad\mathbb{P}\text{-a.s. whenever }\alpha_1\le \alpha_2.
\]
Moreover, if \(\alpha \in (0,\infty)\) and \(\alpha_n\uparrow \alpha\), then
\begin{equation} \label{eq:union}
C^\mu_\alpha(\0)
=
\bigcup_{n\ge1} C^\mu_{\alpha_n}(\0)
\qquad\mathbb{P}\text{-a.s.}
\end{equation}
Consequently,
\[
\chi_\mu(\alpha)=\lim_{n\to\infty}\chi_\mu(\alpha_n).
\]
\end{lemma}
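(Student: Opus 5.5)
The plan is to build the coupling from a single Poisson point process and then read off the left-continuity from the fact that every animal present at level \(\alpha\) is already present at some strictly smaller level.

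\emph{Construction of the coupling.} Let \((\Pi_A)_{A\in\mathfrak A}\) be independent Poisson point processes on \([0,\infty)\), where \(\Pi_A\) has intensity \(\mu(A)\,\mathrm{d}s\). For each \(\alpha\ge0\) set
\[
N_A^\alpha:=\Pi_A([0,\alpha]).
\]
For fixed \(\alpha\) the variables \((N_A^\alpha)_{A}\) are independent and Poisson distributed with parameter \(\alpha\mu(A)\), so \(\mathcal O^\mu_\alpha=\bigcup_{A:N_A^\alpha\ge1}E(A)\) has the correct law. Since \(N_A^{\alpha}\) is non-decreasing in \(\alpha\) for every \(A\), we get \(\mathcal O^\mu_{\alpha_1}\subseteq\mathcal O^\mu_{\alpha_2}\) surely whenever \(\alpha_1\le\alpha_2\).

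\emph{The identity \eqref{eq:union}.} The inclusion "\(\supseteq\)" follows from monotonicity. For "\(\subseteq\)", observe that \(\mathfrak A\) is countable and \(\Pi_A(\{\alpha\})=0\) almost surely for each fixed \(A\). Hence, almost surely, for every \(A\) we have
\[
N_A^\alpha=\lim_n N_A^{\alpha_n},
\]
so \(N_A^\alpha\ge1\) implies \(N_A^{\alpha_n}\ge1\) for all large \(n\). Now take \(v\in C^\mu_\alpha(\0)\). There is a finite path from \(\0\) to \(v\) in \(\mathcal O^\mu_\alpha\). Each of its finitely many edges lies in some animal present at level \(\alpha\), so choosing finitely many such animals and taking the maximum of the corresponding thresholds yields an \(n\) with all of them present at level \(\alpha_n\). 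Then \(v\in C^\mu_{\alpha_n}(\0)\).

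The only subtle point is the "finitely many" step. Paths are finite, and the countability of \(\mathfrak A\) lets us exclude the null event where some animal has an atom exactly at \(\alpha\). We need \(\alpha>0\) so that an increasing sequence \(\alpha_n\uparrow\alpha\) exists; if \(\alpha_n=\alpha\) eventually, the claim is trivial.

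\emph{The susceptibility limit.} The clusters \(C^\mu_{\alpha_n}(\0)\) increase in \(n\), and \eqref{eq:union} gives \(|C^\mu_{\alpha_n}(\0)|\uparrow|C^\mu_\alpha(\0)|\) almost surely, with the value \(\infty\) allowed. Monotone convergence then gives
\[
\chi_\mu(\alpha_n)\to\chi_\mu(\alpha),
\]
including the case \(\chi_\mu(\alpha)=\infty\).
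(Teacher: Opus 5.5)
Your proof is correct and follows the paper's argument. The paper also uses a single Poisson point process on $\mathfrak A\times\mathbb R_+$ with intensity $\mu(A)\otimes\mathrm dt$, which is equivalent to your independent family $(\Pi_A)_{A\in\mathfrak A}$, notes that almost surely no point lies exactly at level $\alpha$, and concludes via the increasing union of configurations and monotone convergence; your finite-path argument just spells out the passage from configurations to clusters that the paper leaves implicit.
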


\begin{proof}
Let \(\Pi\) be a Poisson point process on
\(\mathfrak A\times\mathbb R_+\) with intensity
\(\mu(A)\otimes dt\). We declare an animal \(A\) to be present at level
\(\alpha\) if \((A,t)\in\Pi\) for some \(t\le\alpha\). This gives the
monotone coupling.

Since
\[
\Pi(\mathfrak A\times\{\alpha\})=0
\]
almost surely for deterministic \(\alpha\), the configuration at level
\(\alpha\) is the increasing union of the configurations at levels
\(\alpha_n\uparrow\alpha\). The identity \eqref{eq:union} follows, and the statement for
\(\chi_\mu\) follows by monotone convergence.
\end{proof}

\begin{lemma}[Aizenman--Newman inequality for Poisson zoos]
\label{lem:zan}
Assume $C_*(\mu)<\infty$. Let $(V_n)_{n\ge1}$ be a
increasing finite connected
exhaustion of $V(G)$ with $\0\in V_1$, and let
\[
\widehat\chi_n^\mu(\alpha)
:=
\max_{x\in V_n}\sum_{y\in V_n}\P^{(n)}(x\leftrightarrow y \text{ at level }\alpha),
\]
where $\P^{(n)}$ is the probability measure underlying the finite-volume zoo
using only animals $A\in\mathfrak A$ with $V(A)\subseteq V_n$, and
$\{x\leftrightarrow y \text{ at level }\alpha\}$ denotes the event of $x$ and $y$ being contained in the same connected component of the graph induced by the
corresponding occupied edge set at level $\alpha$. Then for every $n$ and Lebesgue-a.e.\
$\alpha\ge0$,
\begin{equation} \label{eq:diffIneq}
\frac{\mathrm d}{\mathrm d\alpha}\widehat\chi_n^\mu(\alpha)
\le
C_*(\mu)\,\widehat\chi_n^\mu(\alpha)^2.
\end{equation}
\end{lemma}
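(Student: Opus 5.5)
We prove the Aizenman--Newman differential inequality \eqref{eq:diffIneq} by a Russo--Margulis type formula for the finite-volume Poisson zoo, followed by a BK/van den Berg--Kesten argument.

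\emph{Step 1: derivative formula.} In finite volume only finitely many animals $A$ with $V(A)\subseteq V_n$ matter. Each is present independently with probability $1-e^{-\alpha\mu(A)}$. For $x,y\in V_n$ set $\tau_{xy}(\alpha):=\P^{(n)}(x\leftrightarrow y)$. This is a polynomial in these finitely many probabilities, composed with smooth functions of $\alpha$, hence differentiable. The Poisson Russo formula gives
\[
\frac{\mathrm d}{\mathrm d\alpha}\tau_{xy}(\alpha)=\sum_{A:\,V(A)\subseteq V_n}\mu(A)\,\P^{(n)}\bigl(A\text{ pivotal for }x\leftrightarrow y\bigr).
\]
The formula follows from $\frac{\mathrm d}{\mathrm d\alpha}(1-e^{-\alpha\mu(A)})=\mu(A)e^{-\alpha\mu(A)}$ and independence. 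Here ``pivotal'' means that $x\leftrightarrow y$ holds when $A$ is added but fails without it, evaluated on the configuration with $A$ removed.

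\emph{Step 2: BK bound on pivotality.} If $A$ is pivotal, then in the configuration without $A$ there are vertices $u,v\in V(A)$ such that $x\leftrightarrow u$ and $v\leftrightarrow y$ occur in disjoint sets of animals. Indeed, the cluster of $x$ and the cluster of $y$ are disjoint, and $A$ meets both. A Poisson zoo is a product measure over animals, so the BK inequality applies (for instance after discretizing each Poisson count into Bernoulli pieces). This gives
\[
\P^{(n)}(A\text{ pivotal})\le\sum_{u,v\in V(A)}\tau_{xu}\tau_{vy}.
\]
Summing over $y\in V_n$ and $A$, and exchanging sums, yields
\[
\frac{\mathrm d}{\mathrm d\alpha}\sum_{y}\tau_{xy}\le\sum_{u}\tau_{xu}\sum_{v}W_\mu(u,v)\sum_y\tau_{vy}\le \widehat\chi_n\cdot C_*(\mu)\cdot\widehat\chi_n,
\]
using $\sum_{A\ni u,v}\mu(A)\le W_\mu(u,v)$ (restriction to $V_n$ only lowers it) and $\sum_y\tau_{vy}\le\widehat\chi_n$ for $v\in V_n$.

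\emph{Step 3: passing to the maximum.} $\widehat\chi_n^\mu$ is a maximum of finitely many differentiable nondecreasing functions $f_x(\alpha)=\sum_y\tau_{xy}$. Hence it is locally Lipschitz and differentiable almost everywhere. At a point of differentiability $\alpha$, pick $x$ attaining the maximum at $\alpha$. Then the right derivative satisfies $\widehat\chi'(\alpha)\le$ the largest derivative among the maximizing $f_x$'s, because $\max_x f_x(\alpha+h)=\max_{x\text{ maximizing}}f_x(\alpha+h)$ for small $h>0$ by continuity. Each such derivative is bounded by $C_*\widehat\chi^2$ from Step 2.

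\emph{Main obstacle.} The delicate point is Step 2. First, one must justify BK for the Poisson zoo, where the ``bits'' are the animal counts $N_A$; we reduce to independent Bernoulli indicators $\mathbbm 1\{N_A\ge1\}$, and since the connectivity events are increasing in these indicators, Reimer/BK for product measures applies directly. Second, one must check that disjoint occurrence in the configuration with $A$ removed is correctly formulated. Here we use that pivotality is measurable with respect to the other animals, which are independent of $N_A$.
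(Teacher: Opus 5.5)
Your proof is correct and follows the same route as the paper. You use a Russo--Mecke derivative formula over the finitely many animals inside $V_n$, then a BK bound through two points $u,v\in V(A)$ that produces the overlap kernel $W_\mu(u,v)$ and hence $C_*(\mu)\,\widehat\chi_n^\mu(\alpha)^2$, and finally the derivative rule for a finite maximum.

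One small slip: the Bernoulli Russo formula carries the factor $\mathrm e^{-\alpha\mu(A)}$. The paper absorbs this factor by requiring the clusters of $x$ and $y$ to be disjoint in the full configuration, which forces $N_A=0$. So your displayed identity should read $\le$, which is harmless because you only need an upper bound.
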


\begin{proof}
For $x,y\in V_n$, write
$p_n^\alpha(x,y):=\P^{(n)}(x\leftrightarrow y \text{ at level }\alpha)$, and write
$C_\alpha^{(n)}(x)$ for the finite-volume cluster of $x$. In finite volume the
animal counts are independent Poisson variables. The connection events depend
only on the associated occupation indicators, and the BK inequality
\cite{vdBK1985} applies to these independent indicator coordinates for
increasing events with disjoint animal-coordinate witnesses; see also
\cite{vdB1996PoissonBK} for the corresponding Poisson setting. The
differentiation in $\alpha$ is given by the Poisson Russo--Mecke formula, i.e.\
the Mecke formula applied to the intensity parameter; see
\cite{LastPenrose2018}.

Thus, for $x\ne y$ and a.e.\ $\alpha$,
\begin{align*}
\frac{\mathrm d}{\mathrm d\alpha}p_n^\alpha(x,y)
&=
\sum_{\substack{A\in\mathfrak A:\,V(A)\subseteq V_n}}\mu(A)\,
\P^{(n)}\left(
\begin{gathered}
C_\alpha^{(n)}(x)\cap C_\alpha^{(n)}(y)=\varnothing,\\
V(A)\cap C_\alpha^{(n)}(x)\ne\varnothing,\\
V(A)\cap C_\alpha^{(n)}(y)\ne\varnothing
\end{gathered}
\right).
\end{align*}
On the event in the probability, choose $u,v\in V(A)$ with
$x\leftrightarrow u$ and $y\leftrightarrow v$. Since the two clusters are
disjoint, these two connections occur disjointly, and BK gives
\[
\frac{\mathrm d}{\mathrm d\alpha}p_n^\alpha(x,y)
\le
\sum_{u,v\in V_n}
p_n^\alpha(x,u)\,p_n^\alpha(y,v)\,W_\mu(u,v).
\]
For $x=y$ the same inequality is immediate. Set
\[
\chi_{n,x}^\mu(\alpha):=\sum_{y\in V_n}p_n^\alpha(x,y).
\]
Summing over $y$ and using symmetry of connection probabilities gives, for
each $x\in V_n$,
\begin{align*}
\frac{\mathrm d}{\mathrm d\alpha}
\chi_{n,x}^\mu(\alpha)
&\le
\sum_{u\in V_n}p_n^\alpha(x,u)
\sum_{v\in V_n}W_\mu(u,v)\sum_{y\in V_n}p_n^\alpha(v,y)\\
&\le
C_*(\mu)\,\widehat\chi_n^\mu(\alpha)^2.
\end{align*}
Since $\widehat\chi_n^\mu$ is the maximum of finitely many absolutely continuous
functions, it is absolutely continuous. At a.e.\ $\alpha$ where all
$\chi_{n,x}^\mu$ and $\widehat\chi_n^\mu$ are differentiable, the elementary
derivative rule for finite maxima gives
\[
\big(\widehat\chi_n^\mu\big)'(\alpha)
\le
\max_{x:\,\chi_{n,x}^\mu(\alpha)=\widehat\chi_n^\mu(\alpha)}
\big(\chi_{n,x}^\mu\big)'(\alpha).
\]
Here the maximum is taken over the vertices \(x\) at which
\(\chi_{n,x}^\mu(\alpha)\) attains the value
\(\widehat\chi_n^\mu(\alpha)\). The preceding bound holds for each such
vertex, and therefore yields the stated inequality.
\end{proof}

The differential inequality implies that finite susceptibility cannot break
down instantaneously. Starting from a parameter $\alpha_0$ with finite
susceptibility, integrating the inequality for the inverse susceptibility gives
a uniform bound slightly to the right of $\alpha_0$. This is the usual
\emph{openness of the subcritical phase} mechanism (when defined in terms of finite susceptibility, opposed to the percolation probability).

\begin{lemma}[Openness of the subcritical phase]\label{lem:zopdiv}
Assume $\mu$ is such that $C_*(\mu)<\infty$. If $\chi_\mu(\alpha_0)<\infty$ for some $\alpha_0 \in (0,\infty)$, then there exists
$\delta>0$ such that
\begin{equation} \label{eq:finiteness}
\chi_\mu(\alpha)<\infty
\qquad\text{for all }\alpha\in[\alpha_0,\alpha_0+\delta).
\end{equation}
In particular, if $\alpha_{\#}^\mu<\infty$, then
\[
\lim_{\alpha\uparrow\alpha_{\#}^\mu}\chi_\mu(\alpha)=\infty.
\]
\end{lemma}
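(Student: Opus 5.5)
We will integrate the differential inequality of Lemma~\ref{lem:zan} for the reciprocal of the finite-volume susceptibility, and then pass to the infinite-volume limit using Lemma~\ref{lem:zmoncp}.

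\textbf{Step 1: finite-volume bound.} Fix an exhaustion $(V_n)$ as in Lemma~\ref{lem:zan}. First we compare the finite-volume maximal susceptibility with $\chi_\mu(\alpha_0)$. The finite-volume zoo is the infinite zoo restricted to animals inside $V_n$, so it is stochastically dominated by the full zoo. Hence $p_n^{\alpha}(x,y)\le \P(x\leftrightarrow y)$, and by transitivity
\[
\widehat\chi_n^\mu(\alpha_0)\le \chi_\mu(\alpha_0)=:K<\infty
\]
for every $n$.

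\textbf{Step 2: integrating the inequality.} The function $\widehat\chi_n^\mu$ is absolutely continuous and at least $1$, so $1/\widehat\chi_n^\mu$ is absolutely continuous as well. By \eqref{eq:diffIneq},
\[
\frac{\mathrm d}{\mathrm d\alpha}\bigl(1/\widehat\chi_n^\mu\bigr)\ge -C_*(\mu)
\]
almost everywhere. Integrating from $\alpha_0$ gives
\[
1/\widehat\chi_n^\mu(\alpha)\ge 1/K-C_*(\mu)(\alpha-\alpha_0).
\]
Set $\delta:=1/(2KC_*(\mu))$, with $\delta=\infty$ if $C_*=0$. Then for all $\alpha\in[\alpha_0,\alpha_0+\delta)$ we get $\widehat\chi_n^\mu(\alpha)\le 2K$, uniformly in $n$.

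\textbf{Step 3: passing to infinite volume.} Realize all finite-volume zoos inside the Poisson coupling of Lemma~\ref{lem:zmoncp}. The finite-volume configurations then increase in $n$ to the full configuration, since each animal is finite and eventually lies in $V_n$. A connection $\0\leftrightarrow y$ uses finitely many animals, so $\P^{(n)}(\0\leftrightarrow y)\uparrow \P(\0\leftrightarrow y)$. By monotone convergence in $y$,
\[
\chi_\mu(\alpha)=\lim_n\sum_{y\in V_n}p_n^\alpha(\0,y)\le \liminf_n\widehat\chi_n^\mu(\alpha)\le 2K,
\]
which proves \eqref{eq:finiteness}.

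\textbf{Step 4: blow-up at $\alpha_{\#}^\mu$.} Suppose $\chi_\mu(\alpha)$ stayed bounded by some $K'$ as $\alpha\uparrow\alpha_{\#}^\mu$. Lemma~\ref{lem:zmoncp} would then give $\chi_\mu(\alpha_{\#}^\mu)\le K'<\infty$. Applying the first part at $\alpha_0=\alpha_{\#}^\mu$ would give finiteness slightly beyond $\alpha_{\#}^\mu$, contradicting its definition as a supremum. Since $\chi_\mu$ is non-decreasing, the limit exists and must be $\infty$. The case $\alpha_{\#}^\mu=0$ is excluded here because $\alpha_0>0$ is required. This is harmless: if $\chi_\mu(\alpha)=\infty$ for all $\alpha>0$, the limit statement is vacuous or trivially interpreted, and we would note this separately.

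\textbf{Main obstacle.} The delicate point is Step 3: justifying that finite-volume connection probabilities converge to infinite-volume ones. This needs a single probability space in which finite-volume clusters increase to the infinite cluster, and that is exactly what the Poisson coupling restricted to animals in $V_n$ provides. The remaining steps are routine ODE comparison.
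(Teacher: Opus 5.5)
Your proposal is correct and follows the paper's proof step for step: the uniform initial bound $\widehat\chi_n^\mu(\alpha_0)\le\chi_\mu(\alpha_0)$, integration of $\frac{\mathrm d}{\mathrm d\alpha}(\widehat\chi_n^\mu)^{-1}\ge -C_*(\mu)$ with $\delta=1/(2C_*(\mu)\chi_\mu(\alpha_0))$, the limit $n\to\infty$ (which you justify in more detail than the paper), and the contradiction at $\alpha_{\#}^\mu$ via Lemma~\ref{lem:zmoncp}. Your loose end about $\alpha_{\#}^\mu=0$ closes directly: the same integration from $\alpha_0=0$, where $\widehat\chi_n^\mu(0)=1$, gives $\chi_\mu\le2$ on $[0,1/(2C_*(\mu)))$, so $\alpha_{\#}^\mu>0$ automatically.
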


\begin{proof}
Let $\widehat\chi_n^\mu$ be as in Lemma~\ref{lem:zan}. Finite-volume
monotonicity and transitivity give
\[
\widehat\chi_n^\mu(\alpha_0)\le\chi_\mu(\alpha_0)
\qquad\text{for all }n.
\]
For fixed $n$, the quantity $\widehat\chi_n^\mu(\alpha)$ is finite and positive
for all $\alpha$, and it is absolutely continuous on compact intervals. Hence
$(\widehat\chi_n^\mu)^{-1}$ is absolutely continuous, and at a.e.\ point of
differentiability of $\widehat\chi_n^\mu$ the differential inequality
\eqref{eq:diffIneq} gives
\[
\frac{\mathrm d}{\mathrm d\alpha}(\widehat\chi_n^\mu(\alpha))^{-1}
\ge -C_*(\mu).
\]
No differentiability at the endpoint $\alpha_0$ is needed. Integrating this
a.e.\ inequality on $[\alpha_0,\alpha]$ and using the preceding initial bound
yields
\[
(\widehat\chi_n^\mu(\alpha))^{-1}
\ge
\chi_\mu(\alpha_0)^{-1}-C_*(\mu)(\alpha-\alpha_0).
\]
Choosing
\[
\delta:=\frac{1}{2C_*(\mu)\chi_\mu(\alpha_0)}
\]
(with the evident interpretation if $C_*(\mu)=0$) gives
$\widehat\chi_n^\mu(\alpha)\le2\chi_\mu(\alpha_0)$ for all
$\alpha\in[\alpha_0,\alpha_0+\delta)$, uniformly in $n$. Letting
$n\to\infty$ along the exhaustion provides us with \eqref{eq:finiteness}.

If the left limit of $\chi_\mu$ at a finite $\alpha_{\#}^\mu$ were
finite, Lemma~\ref{lem:zmoncp} would give
$\chi_\mu(\alpha_{\#}^\mu)<\infty$, and openness at
$\alpha_{\#}^\mu$ would contradict the definition of
$\alpha_{\#}^\mu$.

The same integrated estimate also gives the usual mean-field lower bound on
the divergence of the susceptibility. Indeed, for
$\alpha<\beta<\alpha_{\#}^\mu$, applying the preceding argument on
$[\alpha,\beta]$ and then letting $n\to\infty$ yields
\[
(\chi_\mu(\beta))^{-1}
\ge
(\chi_\mu(\alpha))^{-1}-C_*(\mu)(\beta-\alpha).
\]
Letting $\beta\uparrow\alpha_{\#}^\mu$ and using the divergence just proved
gives
\[
\chi_\mu(\alpha)\ge
\frac{1}{C_*(\mu)(\alpha_{\#}^\mu-\alpha)}
\qquad(\alpha<\alpha_{\#}^\mu),
\]
when $0<C_*(\mu)<\infty$ and $\alpha_{\#}^\mu<\infty$. Thus, in cases where
$\alpha_{\#}^\mu=\alpha_{\mathsf c}^\mu$, the susceptibility exponent, if it
exists, is at least one.
\end{proof}

\subsection{Exploration scheme and sensitivity for the $d$-regular tree} \label{sec:2124}

From now on, fix $d\ge3$ and set $G=\mathbb T_d$. We now prove the two tree
statements, Theorems~\ref{thm:zsharp} and
\ref{thm:zenhan}. The proof of the coincidence of critical values is the
direct boundary-sprinkling argument; the enhancement theorem will then use this
coincidence result.
The basic idea, following the exploration scheme of Chang and Sapozhnikov
\cite{ChangSapo16}, can in principle be applied to all graphs. Let $\alpha <
\alpha_{\mathsf c}^\mu(G)$,
so that $C_\alpha^\mu(\0)$ is finite almost surely.
We can build this cluster by the following exploration scheme:
we first sample the Poisson counts $N_A$ of animals for all
$A \in \mathfrak A$ that contain $\0$. Then we sample $N_A$ for all
$A \in \mathfrak A$ that contain a vertex covered by this first group of
animals, but do not contain $\0$. We continue recursively by sampling
$N_A$ for those $A$ that contain a vertex that has been uncovered
in the previous generation, but no vertex from older generations. The algorithm
terminates once there is an iteration where no new vertex is discovered. Since $\alpha < \alpha_{\mathsf c}^\mu(G)$,
it does so almost surely. The cluster of discovered vertices has the same distribution
as $C_\alpha^\mu(\0)$.

Now consider the edge boundary $\partial_E C_\alpha^\mu(\0)$  of this cluster,
i.e.\ the edges of $G$ that have one vertex in $C_\alpha^\mu(\0)$ and one
outside. Since $C_\alpha^\mu(\0)$ is a finite connected vertex set in
$\mathbb T_d$, the tree identity gives
\[
|\partial_E C_\alpha^\mu(\0)|
=(d-2)|C_\alpha^\mu(\0)|+2.
\]
Thus Lemma~\ref{lem:zopdiv} implies that, when
$\alpha_{\#}^\mu<\infty$, the expected boundary size diverges as
$\alpha\uparrow\alpha_{\#}^\mu$. In particular, for each $\delta\in(0,1]$ and
$M<\infty$, we may choose
$\alpha<\alpha_{\#}^\mu\leq\alpha_{\mathsf c}^\mu$ so that
$\mathbb E[|\partial_E C_\alpha^\mu(\0)|]>M/\delta$.
Now a Bernoulli enhancement of strength $\delta$ is applied to the edges of
$\partial_E C_\alpha^\mu(\0)$, creating on average at least $M$
open edges that lead to vertices that are not in $C_\alpha^\mu(\0)$.
Let us call such vertices {\em seed vertices}.

At this point, we would like to repeat the procedure recursively: let
$v \in G \setminus C_\alpha^\mu(\0)$ be a seed vertex. We can then
grow the connected component corresponding to $v$ just as before: let
$\mathfrak A_0$ be the set of all graph animals that contain no vertex in
$C_\alpha^\mu(\0)$. These are precisely those animals where $N_A$ has not
yet been sampled. We then start
by sampling $N_A$ for all $A \in \mathfrak A_0$ that contain $v$, then for those
that contain a vertex just discovered but not $v$, and so on. We end up with
a cluster $\widetilde C_{\alpha}^{\mu}(v)$. When we do this for every seed vertex,
we have thus explored a larger part of $G$, still finite. This new part
again has an edge boundary, on which we apply the Bernoulli sprinkling,
creating next generation seeds, and so on. This algorithm stops when either
no seeds are produced at a certain point, or when the exploration in a certain
generation yields $N_A = 0$ for all graph animals containing any of the seeds.
However, if each seed $v$ produces on average at least a number $s(v)>1$ of new
seeds, then we can couple the number of total seeds in generation $n$ to the
total offspring of a supercritical Bienaym\'e--Galton--Watson tree, which shows
that the Poisson zoo is $\delta$-sensitive to Bernoulli enhancement for this
particular $\delta$.

There are two major problems with this approach. The first is that
even when $v$ is mapped to $\0$ by
a graph isomorphism, the distribution of $\widetilde C_{\alpha}^{\mu}(\0)$
is different from the distribution of $C_\alpha^\mu(\0)$: the
algorithm only has access to those graph animals that do not intersect
the previously explored area, so $\widetilde C_{\alpha}^{\mu}(\0)$ will be
(possibly much) smaller than $C_{\alpha}^{\mu}(\0)$.
The second problem is that
during the exploration of $\widetilde C_{\alpha}^{\mu}(v)$ for a given seed $v$,
a possibly large number of other seed vertices is already discovered. So, even
finding
a large number of seed vertices does not guarantee that we will have a large
number of attempts at building the next generation cluster.

Both problems seem to be rather hard on $\mathbb Z^d$,
but are solvable on the tree.
Indeed, the second problem is completely absent there, because at each step,
the explored area is a connected subgraph of $\mathbb T_d$, and so two
different seeds are necessarily in different connected components of the
unexplored area.

For the first problem, we introduce the following notions: for $x,y \in \mathbb T_d$ write $d(x,y)$ for the
graph distance of $x$ and $y$. Let $e = (x,y)$ be an oriented edge of
$\mathbb T_d$, so that $d(\0,x) < d(\0,y)$.
$T_e^+$ denotes the connected component of
$\mathbb T_d\setminus\{e\}$ containing $y$, and $T_e^-$  the connected component containing $x$. Let
\[
\mathfrak A_e^\pm := \{A \in \mathfrak A: V(A) \subset V(T_e^\pm) \}
\]
be the set of animals completely contained in $T_e^\pm$, and
\[
\mathcal O_{\alpha}^{\mu,\pm,e} := \bigcup_{A \in \mathfrak A_e^\pm : N_A^\alpha
\geq 1} E(A).
\]
Write $C_{\alpha}^{\mu,+,e}(y)$ for the cluster of
$\mathcal O_{\alpha}^{\mu,+,e}$ that contains $y$,
and $C_{\alpha}^{\mu,-,e}(x)$ for the cluster of
$\mathcal O_{\alpha}^{\mu,-,e}$ that contains $x$.

Finally, let $\mathfrak A_e := \mathfrak A \setminus (\mathfrak A_e^+ \cup \mathfrak A_e^-)$ be the set of animals that contain the edge $e$, and call
\[
\lambda_\mu:=\sum_{A\in\mathfrak A_e}\mu(A)
\]
the edge intensity of the Poisson zoo. By the invariance of $\mu$, it
does not depend on
the choice of $e$, and unless the zoo is empty with probability one (a case that
we tacitly exclude), we have $\lambda_\mu > 0$ if the zoo is non-degenerate.

The factor $e^{-\alpha\lambda_\mu}$ below has a simple meaning:
for an edge to be a boundary edge of the root cluster, the root must be
connected to the near endpoint, but no occupied animal may use the boundary edge
itself. Since animals are connected, any animal crossing from one side of the
edge to the other must contain that edge. This makes the no-crossing event
independent of the inward connection event.

\begin{lemma}\label{lem:zbfact}
In the setting $G=\mathbb T_d$, $d\ge3$, for every
$\alpha < \alpha_{\mathsf c}^\mu(G)$ and every oriented edge $e=(x_e,y_e)$
directed away from $\0$, so that $y_e$ is the endpoint farther from $\0$, we have
\[
\E[|\partial_E C_\alpha^\mu(\0)|]
=
d\,e^{-\alpha\lambda_\mu}\,
\E[|C_{\alpha}^{\mu,+,e}(y_e)|].
\]
The equality is understood in $[0,\infty]$.
\end{lemma}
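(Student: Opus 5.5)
The plan is to write the expected boundary size as a sum over oriented edges, factor each summand into an edge-independent term times a half-tree connection probability, and then regroup the half-tree probabilities by distance. Since $C_\alpha^\mu(\0)$ is connected and contains $\0$, every edge of $\partial_E C_\alpha^\mu(\0)$ has its near endpoint inside the cluster and its far endpoint outside. So, writing $f=(x_f,y_f)$ for oriented edges directed away from $\0$, I would start from
\[
\E[|\partial_E C_\alpha^\mu(\0)|]=\sum_{f}\P\bigl(x_f\in C_\alpha^\mu(\0),\ y_f\notin C_\alpha^\mu(\0)\bigr),
\]
which holds in $[0,\infty]$ by Tonelli.

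The first key step is the factorisation of each summand, using the observation stated just before the lemma. Every animal is connected, and removing $f$ disconnects $\mathbb T_d$. Hence any occupied animal meeting both $T_f^-$ and $T_f^+$ lies in $\mathfrak A_f$. Suppose no animal of $\mathfrak A_f$ is occupied. Then the occupied edges near $\0$ decompose along $f$, so $C_\alpha^\mu(\0)=C_\alpha^{\mu,-,f}(\0)$. Conversely, if some animal of $\mathfrak A_f$ is occupied and $x_f\in C_\alpha^\mu(\0)$, then that animal contains $f$, and so $y_f\in C_\alpha^\mu(\0)$. Together these give
\[
\{x_f\in C_\alpha^\mu(\0),\,y_f\notin C_\alpha^\mu(\0)\}=\{N_A^\alpha=0\ \forall A\in\mathfrak A_f\}\cap\{x_f\in C_\alpha^{\mu,-,f}(\0)\}.
\]
The two events on the right depend on the disjoint families $\mathfrak A_f$ and $\mathfrak A_f^-$ of independent Poisson counts. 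Therefore the probability equals $e^{-\alpha\lambda_\mu}\,p_f$, where $p_f:=\P(\0\leftrightarrow x_f \text{ in } \mathcal O_\alpha^{\mu,-,f})$.

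The second step, which I expect to be the main bookkeeping obstacle, is to show $\sum_f p_f=d\,\E[|C_\alpha^{\mu,+,e}(y_e)|]$. I would use a tree automorphism $\phi_f$ sending the oriented edge $(x_f,y_f)$ to the reversed edge $(y_e,x_e)$. This map sends $T_f^-$ onto $T_e^+$ and $x_f$ to $y_e$, and it sends $\0$ to a vertex $z_f\in V(T_e^+)$ with $d(z_f,y_e)=d(\0,x_f)$. Invariance of $\mu$ then gives $p_f=\P(y_e\leftrightarrow z_f\text{ in }\mathcal O_\alpha^{\mu,+,e})$. The automorphisms fixing $x_e$ and $y_e$ act transitively on each sphere of radius $k$ around $y_e$ inside $T_e^+$, and they preserve the law of $\mathcal O_\alpha^{\mu,+,e}$. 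So this probability depends only on $k=d(\0,x_f)$; call it $\pi_k$. Counting then gives the claim. There are $d(d-1)^k$ edges $f$ directed away from $\0$ with $d(\0,x_f)=k$, and $(d-1)^k$ vertices of $T_e^+$ at distance $k$ from $y_e$. Hence
\[
\sum_f p_f=\sum_{k\ge0}d(d-1)^k\pi_k=d\sum_{w\in V(T_e^+)}\P(w\in C_\alpha^{\mu,+,e}(y_e))=d\,\E[|C_\alpha^{\mu,+,e}(y_e)|].
\]
All sums have nonnegative terms, so the identity holds in $[0,\infty]$. The same symmetry argument also shows that the right-hand side does not depend on the choice of $e$.

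Finally, I would remark that the hypothesis $\alpha<\alpha_{\mathsf c}^\mu$ is not actually needed for the identity itself. It only guarantees that the cluster is a.s. finite, so that the boundary is the one relevant for the exploration scheme.
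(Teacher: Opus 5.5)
Your proof is correct and follows essentially the same route as the paper. You factorise $\{f\in\partial_E C_\alpha^\mu(\0)\}$ into the independent events $\{N_A^\alpha=0\ \forall A\in\mathfrak A_f\}$ and an inward connection using only animals in $\mathfrak A_f^-$, and then identify the sum with $d\,\E[|C_\alpha^{\mu,+,e}(y_e)|]$ by the same automorphism and level-counting argument. Your use of the arc stabiliser, acting transitively on spheres in $T_e^+$, makes the paper's ``level by level'' step precise. Your closing remark is also right: $\alpha<\alpha_{\mathsf c}^\mu$ is not needed for the identity, because in a tree every boundary edge of a connected cluster containing $\0$ is automatically directed away from $\0$.
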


\begin{proof}
For an oriented edge $f=(u,v)$,
the event $\{f\in\partial_E C_\alpha^\mu(\0)\}$ is the
intersection of
\[
\{\0 \stackrel{\mathfrak A_f^-}{\longleftrightarrow} u \text{ at level }\alpha\} :=
\{\0\leftrightarrow u\text{ using only animals } A \in \mathfrak A_f^- \text{ with } N_A \ge 1\},
\]
with the event that $N_A = 0$ for all $A \in \mathfrak A_f$. These two
events depend on disjoint animal classes and are therefore independent. The
second has probability $\exp(-\alpha\lambda_\mu)$. Hence
\[
\P(f\in\partial_E C_\alpha^\mu(\0))
=
e^{-\alpha\lambda_\mu}
\P(\0 \stackrel{\mathfrak A_f^-}{\longleftrightarrow} u \text{ at level } \alpha).
\]
Since $\alpha<\alpha_{\mathsf c}^\mu(G)$, the cluster
$C_\alpha^\mu(\0)$ is finite almost surely. Thus every boundary edge of
$C_\alpha^\mu(\0)$ is directed away from $\0$. By Tonelli's theorem,
\[
\mathbb E[|\partial_E C_\alpha^\mu(\0)|]
=
\sum_{f=(x,y)\text{ directed away from }\0}
\P(f\in\partial_E C_\alpha^\mu(\0)).
\]
Combining this with the previous equality gives
\[
\mathbb E[|\partial_E C_\alpha^\mu(\0)|]
=
e^{-\alpha\lambda_\mu}
\sum_{f=(u,v)\text{ directed away from }\0}
\P(\0 \stackrel{\mathfrak A_f^-}{\longleftrightarrow} u \text{ at level }\alpha).
\]
It remains to identify the last sum. Fix the oriented edge
$e=(x_e,y_e)$ from the statement. If $f=(u,v)$ is directed away from $\0$ and
$d(\0,u)=k$, then there is a tree automorphism mapping the component
$T_f^-$, viewed from the boundary endpoint $u$, to the forward component
$T_e^+$, viewed from $y_e$, and sending the distinguished vertex $\0$ to some
vertex $w\in T_e^+$ with $d(y_e,w)=k$. By invariance, the corresponding
connection probability is the probability that $w$ belongs to
$C_{\alpha}^{\mu,+,e}(y_e)$. There are $d(d-1)^k$ oriented edges
$f=(u,v)$ directed away from $\0$ whose endpoint $u$ closer to $\0$ satisfies
$d(\0,u)=k$, whereas at distance $k$ from $y_e$ inside $T_e^+$ there are
$(d-1)^k$ vertices. Thus, level by level, the boundary-edge sum has a factor
$d$ relative to the forward-cluster expectation, and therefore
\[
\sum_{f=(u,v)\text{ directed away from }\0}
\P(\0 \stackrel{\mathfrak A_f^-}{\longleftrightarrow} u \text{ at level }\alpha)
=
d\mathbb E[|C_{\alpha}^{\mu,+,e}(y_e)|].
\]
This proves the claim.
\end{proof}

For an oriented edge $e=(x,y)$ directed away from $\0$, define the forward
boundary
\[
\partial_E^+ C_{\alpha}^{\mu,+,e}(y)
:=
\{\{u,v\}:u\in C_{\alpha}^{\mu,+,e}(y),\
v\in T_e^+\setminus C_{\alpha}^{\mu,+,e}(y),\
v\text{ is farther from }x\text{ than }u\}
\]
and set
\[
b_\mu^+(\alpha)
:=
\E\bigl[|\partial_E^+ C_{\alpha}^{\mu,+,e}(y)|\bigr].
\]
The value of $b_\mu^+(\alpha)$ does not depend on the chosen oriented edge.
For $\alpha<\alpha_{\#}^\mu$, the forward cluster has finite expectation by
Lemma~\ref{lem:zbfact} and the tree identity
$|\partial_E C_\alpha^\mu(\0)|=(d-2)|C_\alpha^\mu(\0)|+2$. In particular it is
finite a.s., and hence
\[
|\partial_E^+ C_{\alpha}^{\mu,+,e}(y)|
=
(d-2)|C_{\alpha}^{\mu,+,e}(y)|+1.
\]
Combining this with Lemma~\ref{lem:zbfact} and the tree identity
$|\partial_E C_\alpha^\mu(\0)|=(d-2)|C_\alpha^\mu(\0)|+2$ gives
\[
b_\mu^+(\alpha)
=
\frac{d-2}{d}e^{\alpha\lambda_\mu}
\bigl((d-2)\chi_\mu(\alpha)+2\bigr)+1.
\]
Therefore Lemma~\ref{lem:zopdiv} implies
\[
b_\mu^+(\alpha)\xrightarrow[\alpha\uparrow\alpha_{\#}^\mu]{}\infty
\qquad
\text{whenever }\alpha_{\#}^\mu<\infty.
\]

\begin{lemma}\label{lem:zbridgefield}
Assume that \eqref{eq: finite C_ast} holds and that $\mu$ is non-degenerate.
For every oriented edge $e=(x,y)$ directed away from $\0$, let
\[
\mathfrak B_e
:=
\{A\in\mathfrak A:\ e\in E(A),\ V(A)\subseteq T_e^+\cup\{x\}\}.
\]
Then
\[
\lambda_\mu^+:=\mu(\mathfrak B_e)\in(0,\infty)
\]
does not depend on $e$, and the classes $\mathfrak B_e$, indexed by oriented
edges away from $\0$, are pairwise disjoint.
Consequently, for any $\varepsilon>0$, an independent zoo of intensity
$\varepsilon\mu$ defines, for each oriented edge $e$ away from $\0$, the
indicator
\[
\eta_e^\varepsilon
:=
\mathbf 1\{\text{at least one animal from }\mathfrak B_e
\text{ is present in the increment zoo}\}.
\]
The variables $(\eta_e^\varepsilon)$ are i.i.d.\ Bernoulli variables with parameter
\[
q_\varepsilon:=1-e^{-\varepsilon\lambda_\mu^+}>0.
\]
We call this i.i.d.\ family a {\em Bernoulli bridge field}. On the event
\(\eta_e^\varepsilon=1\), the increment zoo contains an occupied animal that
crosses the corresponding edge \(e\).
\end{lemma}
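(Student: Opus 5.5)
The plan has four steps: positivity, finiteness, independence of $e$, disjointness, and then the Bernoulli field follows from Poisson independence.

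\emph{Positivity.} By non-degeneracy some animal $A_0$ with $E(A_0)\neq\varnothing$ has $\mu(A_0)>0$. Since $A_0$ is a finite subtree of $\mathbb T_d$, it has a leaf $x'$ with unique incident edge $\{x',y'\}\in E(A_0)$. Then $V(A_0)\setminus\{x'\}$ lies in the component of $\mathbb T_d\setminus\{x',y'\}$ containing $y'$.

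Given the oriented edge $e=(x,y)$, choose an automorphism $\phi$ of $\mathbb T_d$ with $\phi(x')=x$ and $\phi(y')=y$; vertex-transitivity together with transitivity of the stabiliser on neighbours makes this possible. Then $\phi(A_0)$ contains $e$ and satisfies $V(\phi(A_0))\subseteq T_e^+\cup\{x\}$, so $\phi(A_0)\in\mathfrak B_e$. By invariance, $\lambda_\mu^+\ge\mu(A_0)>0$.

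\emph{Finiteness.} Every $A\in\mathfrak B_e$ contains both $x$ and $y$, so $\mu(\mathfrak B_e)\le W_\mu(x,y)\le C_\ast(\mu)<\infty$ by \eqref{eq: finite C_ast}.

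\emph{Independence of $e$.} For two oriented edges $e=(x,y)$ and $e'=(x',y')$ directed away from $\0$, take an automorphism $\psi$ with $\psi(x)=x'$ and $\psi(y)=y'$. It maps $T_e^+$, the component of $\mathbb T_d\setminus\{e\}$ containing $y$, onto $T_{e'}^+$, so $\psi(\mathfrak B_e)=\mathfrak B_{e'}$. Invariance of $\mu$ then gives $\mu(\mathfrak B_e)=\mu(\mathfrak B_{e'})$.

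\emph{Disjointness.} This is the one place where a genuine tree argument is needed. Let $e=(x,y)\neq f=(u,v)$ both be directed away from $\0$, and suppose $A\in\mathfrak B_e\cap\mathfrak B_f$. Then $x,y,u,v\in V(A)$, $V(A)\subseteq (T_e^+\cup\{x\})\cap(T_f^+\cup\{u\})$, and $d(\0,x)$ is the minimum of $d(\0,\cdot)$ over $V(A)$.

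Indeed, every vertex of $T_e^+$ is a descendant of $y$, so it is farther from $\0$ than $x$. Hence $x$ is the unique vertex of $A$ closest to $\0$, and by the same reasoning so is $u$; thus $x=u$. Since $e\neq f$, we get $y\neq v$, and $v\in T_e^+$ forces $v$ to be a descendant of $y$. But $v$ is a child of $x$, so $v=y$, a contradiction. The only care needed is the orientation convention: $T_e^+$ is the descendant subtree of $y$, since $e$ points away from $\0$.

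\emph{Bernoulli bridge field.} In the increment zoo of intensity $\varepsilon\mu$, the counts $N_A$ are independent Poisson variables. Hence the numbers of present animals from the pairwise disjoint classes $\mathfrak B_e$ are independent Poisson variables with common mean $\varepsilon\lambda_\mu^+$. Therefore the $\eta_e^\varepsilon$ are i.i.d.\ Bernoulli with parameter $1-e^{-\varepsilon\lambda_\mu^+}$. On $\{\eta_e^\varepsilon=1\}$, a present animal $A\in\mathfrak B_e$ has $e\in E(A)$, so it crosses $e$.
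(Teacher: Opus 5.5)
Your proof is correct and follows essentially the same route as the paper: a leaf edge plus arc-transitivity for positivity, the overlap constant for finiteness, automorphism invariance for independence of $e$, the fact that $x$ is the unique vertex of $A$ closest to $\0$ for disjointness, and Poisson restriction for the i.i.d.\ field. The differences are cosmetic. You bound $\mu(\mathfrak B_e)$ by $W_\mu(x,y)$, whereas the paper uses the total mass of animals containing $x$. You also write out in full the disjointness argument that the paper gives in one line.
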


\begin{proof}
Finiteness follows from \eqref{eq: finite C_ast}, since every animal in
$\mathfrak B_e$ contains the endpoint $x$ of $e=(x,y)$. The automorphism
invariance of $\mu$ gives independence of the value of $\lambda_\mu^+$ from the
choice of $e$.
For positivity, choose an animal $A$ with $\mu(A)>0$ and at least one edge.
Since $A$ is a finite tree, it has a leaf edge. By an automorphism of
$\mathbb T_d$, map this leaf edge to $e=(x,y)$ with the leaf endpoint sent to
$x$ and the rest of $A$ sent into $T_e^+$. This image has positive $\mu$-mass
and belongs to $\mathfrak B_e$.
Finally, an animal in $\mathfrak B_e$ has $e$ as its unique edge closest to
$\0$, so it cannot belong to $\mathfrak B_f$ for another oriented edge $f$.
Poisson restriction to the disjoint classes $\mathfrak B_e$ gives independence
of the indicators \(\eta_e^\varepsilon\), and the Poisson count in each class
has mean \(\varepsilon\lambda_\mu^+\). Thus each indicator has success
probability \(q_\varepsilon\), and every successful bridge contains the
corresponding edge.
\end{proof}

The same boundary estimate also applies in the hypothetical interval between
$\alpha_{\#}^\mu$ and $\alpha_{\mathsf c}^\mu$. Indeed, assume that $\mu$ is
non-degenerate and fix $\alpha<\alpha_{\mathsf c}^\mu$. Then the forward
cluster $C_{\alpha}^{\mu,+,e}(y)$ is finite a.s. For $\alpha=0$ this is
trivial. For $\alpha>0$, if it were infinite with positive probability for a
child edge $e=(\0,y)$, then the independent event that the level-$\alpha$ zoo
contains an animal from $\mathfrak B_e$ would connect $\0$ to this infinite
forward cluster with positive probability, contradicting
$\alpha<\alpha_{\mathsf c}^\mu$. Consequently, if
$\alpha_{\#}^\mu<\alpha<\alpha_{\mathsf c}^\mu$, then
\[
\E[|C_{\alpha}^{\mu,+,e}(y)|]=\infty,
\]
by Lemma~\ref{lem:zbfact}, because the full root cluster is finite a.s.\ but
has infinite expectation. Since the forward cluster is finite a.s., the forward
tree boundary identity applies and gives
\[
b_\mu^+(\alpha)=\infty.
\]

\begin{proposition}\label{prop:zforward-sprinkling}
Fix $\alpha<\alpha_{\mathsf c}^\mu$ and $q\in(0,1]$. Independently of the
level-$\alpha$ zoo, attach to each oriented edge away from $\0$ an independent
Bernoulli bridge variable with parameter $q$, and assume that a successful
bridge connects the two endpoints of the corresponding edge. If
\[
q\,b_\mu^+(\alpha)>1,
\]
then the union of the level-$\alpha$ zoo and the bridge field percolates with
positive probability.
\end{proposition}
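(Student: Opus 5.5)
We construct a Bienaymé--Galton--Watson exploration in which the seeds are the far endpoints of successful bridges. Start from the root: explore the full cluster $C_\alpha^\mu(\0)$ (a.s.\ finite since $\alpha<\alpha_{\mathsf c}^\mu$). It is more convenient, however, to start from a child edge $e_0=(\0,y_0)$ and consider only the forward cluster $C_\alpha^{\mu,+,e_0}(y_0)$. If the forward structure percolates, the union configuration percolates, because $\0$ is joined to $y_0$ with positive probability by an independent successful bridge on $e_0$ (or simply because positive probability of some vertex lying in an infinite cluster gives positivity at $\0$ by transitivity). For a generic oriented edge $e=(x,y)$ away from $\0$, its \emph{offspring} are the edges $f=(u,v)\in\partial_E^+C_\alpha^{\mu,+,e}(y)$ whose bridge variable equals $1$, with $f$ oriented away from $\0$ (equivalently away from $x$).

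The key point is the independence structure on $\mathbb T_d$. The forward cluster $C_\alpha^{\mu,+,e}(y)$ is measurable with respect to the animals in $\mathfrak A_e^+$ that meet it. Given this cluster $K$, with forward boundary edges $f_1,\dots,f_m$, the subtrees $T_{f_i}^+$ are pairwise disjoint. They are also disjoint from $K$ and from everything explored earlier: by induction, the explored region is a connected subtree, and every unexplored forward subtree hangs off it through a single edge. The cluster $C_\alpha^{\mu,+,f_i}(v_i)$ uses only animals in $\mathfrak A_{f_i}^+$, that is, animals with vertex set inside $T_{f_i}^+$. None of these animals has been sampled, since every animal sampled so far contains a vertex of the explored region, which lies outside $T_{f_i}^+$.

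Animals straddling $f_i$, in $\mathfrak A_{f_i}$, were sampled (their count is $0$ on the boundary event), but they are irrelevant for forward clusters, which by definition ignore them. Hence, conditionally on the history, the forward clusters at the children are independent copies of $C_\alpha^{\mu,+,e}(y)$ by automorphism invariance. The bridge variables are independent of everything. The number of offspring $Z$ therefore satisfies $\E[Z]=q\,b_\mu^+(\alpha)$, and the offspring counts of distinct individuals are i.i.d. This gives an exact Galton--Watson process, not merely a stochastic domination, although a domination would suffice.

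Since $q\,b_\mu^+(\alpha)>1$ (including the case $b_\mu^+=\infty$, where truncating $Z$ at a large level $N$ still leaves mean $>1$), the process survives with positive probability. On survival, the union of forward clusters linked by successful bridges forms an infinite connected set containing $y_0$, since each child's cluster is attached to its parent's cluster through the bridge edge. Each forward cluster is finite a.s., as noted before the proposition, so infinitely many generations give infinitely many distinct vertices.

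The main obstacle is making the conditional-independence claim rigorous. We must specify the filtration (animals sampled so far together with revealed bridge variables), check that the forward clusters are stopping-set type objects, and verify via Poisson restriction to the disjoint classes $\mathfrak A_{f_i}^+$ that the law of the future is unaffected. We would write the exploration as sampling animal counts in the classes $\mathfrak A_e^+$ restricted to animals touching the current cluster, and then invoke the spatial Markov (restriction) property of the Poisson zoo.
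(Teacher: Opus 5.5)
Your proposal is correct and follows the paper's proof essentially step by step. You start from a child edge $e_0=(\0,y_0)$ and explore forward clusters linked by successful bridges on their forward boundaries. Disjointness of the forward subtrees, together with Poisson restriction, then yields a Bienaym\'e--Galton--Watson process with $\operatorname{Binomial}(|\partial_E^+C_{\alpha}^{\mu,+,e}(y)|,q)$ offspring of mean $q\,b_\mu^+(\alpha)>1$.

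One small inaccuracy, harmless for the argument: animals crossing $f_i$ that also cross $e_0$ are never sampled and need not be absent. Forward clusters ignore them, and the percolation event is monotone, so nothing breaks.
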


\begin{proof}
If the forward cluster has positive probability to be infinite, then the
conclusion follows immediately after the first successful bridge. We may
therefore assume that all forward clusters below are finite a.s.

Start with a fixed child edge $e_0=(\0,y_0)$. With probability $q>0$, the
bridge across $e_0$ is present. Conditional on this event, reveal the zoo in
$T_{e_0}^+$ and take the cluster
$C_{\alpha}^{\mu,+,e_0}(y_0)$. Then reveal the bridge variables on its forward
boundary and continue recursively through every successful boundary edge.

The forward subtrees encountered by this exploration are pairwise disjoint.
Poisson restriction therefore gives independent copies of the same forward
cluster law, and the bridge variables on their forward boundaries are
independent Bernoulli variables with parameter $q$. The active boundary edges
are thus a Bienaym\'e--Galton--Watson process whose offspring variable is,
conditionally on the forward cluster,
\[
\operatorname{Binomial}\bigl(
|\partial_E^+C_{\alpha}^{\mu,+,e}(y)|,q
\bigr),
\]
and whose mean is $q\,b_\mu^+(\alpha)>1$. Hence this process survives with
positive probability. On survival, concatenating the revealed zoo clusters and
the successful bridge edges gives an infinite occupied path.
\end{proof}

\begin{proof}[Proof of Theorem~\ref{thm:zsharp}]

We now give the direct boundary-sprinkling argument. If the zoo has no positive
mass on any animal with an edge, then no edge is ever occupied and
$\alpha_{\mathsf c}^\mu=\alpha_{\#}^\mu=\infty$. We may therefore assume that
$\mu$ is non-degenerate. If $\alpha_{\#}^\mu=\infty$, then the elementary
inequality $\alpha_{\mathsf c}^\mu\ge\alpha_{\#}^\mu$ already gives equality.

Assume for contradiction that
$\alpha_{\#}^\mu<\alpha_{\mathsf c}^\mu$. Choose
$\bar\alpha\in(\alpha_{\#}^\mu,\alpha_{\mathsf c}^\mu)$ and then
$\delta>0$ such that
\[
\bar\alpha+\delta<\alpha_{\mathsf c}^\mu.
\]
Couple the zoo at level $\bar\alpha+\delta$ as the union of an independent
level-$\bar\alpha$ zoo and an independent increment zoo of intensity
$\delta\mu$. By Lemma~\ref{lem:zbridgefield}, this increment contains an iid
Bernoulli bridge field on the oriented edges away from $\0$, with parameter
\[
q_\delta:=1-e^{-\delta\lambda_\mu^+}>0.
\]
Since $\bar\alpha>\alpha_{\#}^\mu$, we have
$\chi_\mu(\bar\alpha)=\infty$. Since
$\bar\alpha<\alpha_{\mathsf c}^\mu$, the preceding boundary discussion gives
$b_\mu^+(\bar\alpha)=\infty$. Hence
\[
q_\delta b_\mu^+(\bar\alpha)>1.
\]
Proposition~\ref{prop:zforward-sprinkling} implies that the union of the
level-$\bar\alpha$ zoo and the increment bridge field percolates with positive
probability. This union is contained in the zoo at level
$\bar\alpha+\delta$, contradicting
$\bar\alpha+\delta<\alpha_{\mathsf c}^\mu$. Therefore
$\alpha_{\mathsf c}^\mu\le\alpha_{\#}^\mu$, and the reverse inequality is
immediate from the definitions.
\end{proof}

\begin{proof}[Proof of Theorem~\ref{thm:zenhan}]

We first verify that the critical point is positive. Let
\[
D_{\alpha}^\mu(\0)
:=
\bigcup_{\substack{A\in\mathfrak A:\,\0\in V(A)\\N_A^\alpha\ge1}}
\bigl(V(A)\setminus\{\0\}\bigr)
\]
be the set reachable from $\0$ using one occupied animal. By the union bound
and $1-e^{-t}\le t$,
\begin{align*}
\E\bigl[|D_{\alpha}^\mu(\0)|\bigr]
&\le
\sum_{v\ne\0}\sum_{\substack{A\in\mathfrak A:\,\0,v\in V(A)}}
\P(N_A^\alpha\ge1)\\
&\le
\alpha\sum_{A\in\mathfrak A:\,\0\in V(A)}
\bigl(|V(A)|-1\bigr)\mu(A)
\le \alpha C_*(\mu).
\end{align*}
Since the zoo is non-degenerate, $0<C_*(\mu)<\infty$. Choose
$\alpha_0>0$ so that $\alpha_0C_*(\mu)<1$. In the usual one-animal
exploration, completing the as-yet unrevealed animal classes to independent
full copies shows that $|C_{\alpha_0}^\mu(\0)|$ is stochastically dominated by
the total progeny of a Bienaym\'e--Galton--Watson process with offspring law
$|D_{\alpha_0}^\mu(\0)|$. Its offspring mean is strictly smaller than one, and
hence $\chi_\mu(\alpha_0)<\infty$. Consequently,
\[
\alpha_{\mathsf c}^\mu\ge\alpha_{\#}^\mu\ge\alpha_0>0.
\]

First note that the critical point is finite. Indeed, since the zoo is
non-degenerate, Lemma~\ref{lem:zbridgefield} applied to the level-$\beta$ zoo
itself gives an iid Bernoulli bridge field with parameter
\(1-e^{-\beta\lambda_\mu^+}\). For some finite \(\beta\) this parameter is
larger than the Bernoulli bond percolation threshold of \(\mathbb T_d\), and
the zoo at level \(\beta\) percolates. Hence
\(\alpha_{\mathsf c}^\mu<\infty\), and therefore
\(\alpha_{\#}^\mu<\infty\) by Theorem~\ref{thm:zsharp}. This high-intensity
observation is only used to prove finiteness of the threshold; the base
intensity used below will be chosen subcritical.

Fix a Bernoulli enhancement parameter $\delta\in(0,1]$. By the divergence of
$b_\mu^+(\alpha)$, choose $\alpha<\alpha_{\#}^\mu$ so close to
$\alpha_{\#}^\mu$ that
\[
\delta\,b_\mu^+(\alpha)>1.
\]
The independent Bernoulli enhancement is precisely a bridge field with
parameter $\delta$, so Proposition~\ref{prop:zforward-sprinkling} gives
percolation for the enhanced model at the base level $\alpha$. Since
$\alpha<\alpha_{\#}^\mu=\alpha_{\mathsf c}^\mu$ by
Theorem~\ref{thm:zsharp}, this proves $\delta$-sensitivity. As
$\delta\in(0,1]$ was arbitrary, the zoo is infinitesimally sensitive.
\end{proof}

\subsection{Proof of subcritical exponential tails for zoos}
\label{sec:proof-zoo-exponential-tails}

We now prove the exponential tail statement. The idea is different from the
proof of the coincidence of critical values. Instead of using the tree
boundary, we explore the cluster in blocks with respect to the auxiliary zoo
graph, where two vertices are adjacent if they lie in a common animal occupied
at level $\alpha$.
Finite susceptibility allows us to choose a block radius $K$ so that the
expected number of vertices at zoo-distance $K$ is less than one. The
exponential animal-tail assumption then gives exponential moments for the block
size, and the whole exploration is dominated by a subcritical weighted BGW
process.

The proof turns finite susceptibility into a subcritical block exploration. The
shells $S_k(\alpha)$ are measured in the auxiliary zoo graph rather than in the
original tree distance. Since the expected total cluster size is finite, one can
find a shell whose expected size is less than one. This shell becomes the
offspring generation of the block process, while the vertices discovered before
that shell form the block weight.

\begin{lemma}\label{lem:wgwexp}
Let \((B,Y)\) be a pair of non-negative integer-valued random variables such
that
\[
\E[Y]<1
\qquad\text{and}\qquad
\E[e^{\lambda_0(B+Y)}]<\infty
\]
for some \(\lambda_0>0\). Consider a Bienaym\'e--Galton--Watson process in
which each particle \(u\) carries an independent copy \((B_u,Y_u)\) of
\((B,Y)\), has \(Y_u\) children, and contributes weight \(B_u\). Then the total
weight
\[
W:=\sum_u B_u
\]
has an exponential tail. That is, there exist constants \(C,c>0\) such that
\[
\P(W\ge n)\le C e^{-cn}
\qquad\text{for all }n\ge1.
\]
\end{lemma}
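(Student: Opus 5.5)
We bound the exponential moment of \(W\) through a fixed-point relation for its Laplace transform. The plan is to show that \(\E[e^{\lambda W}]<\infty\) for some \(\lambda>0\); Markov's inequality then gives \(\P(W\ge n)\le \E[e^{\lambda W}]e^{-\lambda n}\).

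\emph{Finiteness of the tree.} Since \(\E[Y]<1\), the process is subcritical. Hence the tree is a.s.\ finite and \(W\) is a.s.\ finite.

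\emph{Fixed-point inequality.} Set \(\varphi(\lambda,s):=\E[e^{\lambda B}s^{Y}]\) for \(\lambda\ge0\) and \(s\ge 1\). Decomposing at the root, \(W\) equals \(B_\varnothing\) plus \(Y_\varnothing\) independent copies of \(W\), independent of \((B_\varnothing,Y_\varnothing)\). So \(m(\lambda):=\E[e^{\lambda W}]\) satisfies \(m(\lambda)=\varphi(\lambda,m(\lambda))\) whenever it is finite. Since \(m(\lambda)\) might be infinite a priori, work with truncated versions. Let \(W_k\) be the total weight of the first \(k\) generations and \(m_k(\lambda):=\E[e^{\lambda W_k}]\). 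Then \(m_0(\lambda)=\E[e^{\lambda B}]\), which is finite for \(\lambda\le\lambda_0\), and
\[
m_{k+1}(\lambda)=\varphi(\lambda,m_k(\lambda)).
\]
Each \(m_k\) is finite by induction, as long as \(m_k(\lambda)\le e^{\lambda_0}\) and \(\lambda\le\lambda_0\), because then \(\varphi(\lambda,m_k)\le \E[e^{\lambda_0(B+Y)}]\).

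\emph{Choosing the trap.} Find \(\lambda>0\) and \(s^*\in(1,e^{\lambda_0}]\) with \(\varphi(\lambda,s^*)\le s^*\). Since \(\varphi(\lambda,\cdot)\) is increasing, \(m_0\le s^*\) implies \(m_k\le s^*\) for all \(k\) by induction. Monotone convergence then gives \(m(\lambda)\le s^*<\infty\).

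This step is the main obstacle, and it is handled by a first-order expansion at \((0,1)\). The function \(\varphi\) is finite and smooth on \([0,\lambda_0/2]\times[1,e^{\lambda_0/2}]\), by Hölder and the joint exponential moment. Its derivatives are \(\partial_s\varphi(0,1)=\E[Y]<1\) and \(\partial_\lambda\varphi(0,1)=\E[B]<\infty\).

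Fix \(s=1+\epsilon\). Then
\[
\varphi(\lambda,1+\epsilon)\le 1+\E[Y]\epsilon+C(\lambda+\epsilon^2+\lambda\epsilon).
\]
The constant \(C\) is controlled by the second derivatives, which are themselves bounded via the exponential moment. Choose \(\epsilon\) small enough that \(\E[Y]\epsilon+C\epsilon^2\le \tfrac{1+\E[Y]}{2}\epsilon\). Then take \(\lambda=c\epsilon\) with \(c\) small, which gives \(\varphi(\lambda,1+\epsilon)\le 1+\epsilon\). Also \(m_0(\lambda)=\varphi(\lambda,1)\le\varphi(\lambda,1+\epsilon)\le 1+\epsilon\), so the induction starts.

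A convexity-free alternative is to use directly that \(s\mapsto \varphi(\lambda,s)-s\) is continuous. It is negative at \(s=1+\epsilon\) for \(\lambda=0\), because \(\E[s^Y]<s\) for \(s\) slightly above \(1\), as \(\E[Y]<1\). Continuity in \(\lambda\), by dominated convergence, then keeps it negative for small \(\lambda>0\).

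Finally, conclude with \(C=s^*\) and \(c=\lambda\).
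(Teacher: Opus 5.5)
Your proposal is correct and follows essentially the same argument as the paper. Both trap the truncated exponential moments via the recursion $m_{k+1}(\lambda)=\E[e^{\lambda B}m_k(\lambda)^{Y}]$, using a point $s>1$ with $\E[e^{\lambda B}s^{Y}]\le s$ obtained by perturbing from $\lambda=0$, then conclude by induction, monotone convergence and Markov's inequality. Your ``convexity-free alternative'' is precisely the paper's choice of $s$, while the Taylor expansion merely makes that choice quantitative.
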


\begin{proof}
Let \(f(s):=\E[s^Y]\). Since \(\E[Y]<1\) and \(Y\) has an exponential moment,
we can choose \(s>1\) close enough to \(1\) such that
\[
f(s)<s.
\]
By the exponential moment assumption and continuity at \(\theta=0\), after
decreasing \(\theta>0\) if necessary, we also have
\[
\E[e^{\theta B}s^Y]\le s.
\]
Let \(W_N\) be the total weight up to generation \(N\), and set
\[
\phi_N:=\E[e^{\theta W_N}].
\]
With the convention \(\phi_{-1}=1\), the branching property gives
\[
\phi_N=\E[e^{\theta B}\phi_{N-1}^Y].
\]
Since \(\phi_{-1}=1\le s\), induction yields \(\phi_N\le s\) for every \(N\).
By monotone convergence,
\[
\E[e^{\theta W}]\le s.
\]
Markov's inequality gives
\[
\P(W\ge n)\le s e^{-\theta n},
\]
which proves the claim.
\end{proof}

\begin{proof}[Proof of Theorem~\ref{thm:zsharp} \ref{item:exponentialCrit}]
For $\alpha=0$ the claim is immediate, so assume $\alpha>0$ and
$\chi_\mu(\alpha)<\infty$. Let $\mathcal Z^\mu_\alpha$ be the auxiliary graph on
$V(\mathbb T_d)$ in which two vertices are adjacent whenever they lie in a
common animal $A$ with $N_A^\alpha\ge1$, and let
$\mathsf d_{\mathcal Z^\mu_\alpha}$ be its graph distance. Since each animal is connected, this auxiliary graph has the same
connected components as the occupied zoo graph. Put
\[
S_k(\alpha):=\{v:\mathsf d_{\mathcal Z^\mu_\alpha}(\0,v)=k\}.
\]
The shells partition $C^\mu_\alpha(\0)$, and therefore
\[
\chi_\mu(\alpha)=\sum_{k\ge0}\E_\alpha[|S_k(\alpha)|]<\infty.
\]
Choose $K\ge1$ such that
\[
m_K(\alpha):=\E_\alpha[|S_K(\alpha)|]<1.
\]

We first record the one-step exponential moment. Let
\[
\mathcal N_\alpha(x):=\{v\neq x:\exists A\in\mathfrak A
\text{ with }N_A^\alpha\ge1\text{ and }x,v\in V(A)\}.
\]
The variable $|\mathcal N_\alpha(x)|$ is bounded by
\[
\sum_{A:\,x\in V(A)} |V(A)|\,N_A^\alpha.
\]
Hence, by \eqref{eq:zexp}, for all sufficiently small
$\lambda>0$,
\[
\E[e^{\lambda |\mathcal N_\alpha(x)|}]
\le
\exp\left\{\alpha\sum_{A:\,x\in V(A)}\mu(A)
\big(e^{\lambda |V(A)|}-1\big)\right\}
<\infty.
\]

As in the loop soup block argument, explore the zoo-distance ball from a vertex
one generation at a time. If every discovered vertex is instead given an
independent fresh copy of the full one-step neighborhood $\mathcal N_\alpha(x)$, collisions
are removed and only extra vertices are added. Since $K$ is fixed, the
zoo-distance block
\[
B:=|\{v:\mathsf d_{\mathcal Z^\mu_\alpha}(\0,v)<K\}|
\]
and the shell variable
\[
Y:=|S_K(\alpha)|
\]
are dominated by finitely many generations of a BGW process whose
offspring variable has the law of $|\mathcal N_\alpha(\0)|$. Thus $(B,Y)$ has a joint
exponential moment, and $\E[Y]=m_K(\alpha)<1$.

Now run the $K$-block exploration. When an active vertex $x$ is processed,
reveal the still-unexplored animals needed to determine the zoo-distance
vertices at distances $<K$ and $K$ from $x$. Conditional on the past, the
unrevealed animals form a Poisson process on a remaining set of animal classes.
Adding an independent Poisson process on the omitted classes gives an
independent full zoo with the law seen from $\0$ and therefore a stochastic
upper bound for the conditional block. Thus each block is dominated by an
independent copy of $(B,Y)$; keeping duplicate active vertices as distinct
particles only enlarges the exploration.

Every vertex of $C^\mu_\alpha(\0)$ lies within zoo-distance $<K$ of some active
particle along a shortest zoo-distance path from $\0$. Therefore
$|C^\mu_\alpha(\0)|$ is stochastically dominated by the total weight of the
weighted BGW process with mark law $(B,Y)$.
Lemma~\ref{lem:wgwexp} gives the required exponential tail.

Under the hypotheses of Theorem~\ref{thm:zsharp},
$\alpha_{\mathsf c}^\mu=\alpha_{\#}^\mu$, and every
$\alpha<\alpha_{\#}^\mu$ has finite susceptibility by monotonicity and
the definition of $\alpha_{\#}^\mu$.
\end{proof}

\noindent\textbf{Funding acknowledgement.} AK's research is funded by the Cusanuswerk e.V.

\appendix

\section{Exact computation for \texorpdfstring{$S_5$}{S5}} \label{sec:exComp}

Here, we provide the details of the numerical evaluation of the bounds in the proof of Theorem~\ref{thm: loop soup}. The exact closed-walk formula is
\begin{equation}\label{eq:closed-walk}
p_{2m}^{(5)}(0)
=
\frac{(2m)!}{10^{2m}}
\sum_{\substack{k_1+\cdots+k_5=m\\ k_i\ge0}}
\prod_{i=1}^5\frac{1}{(k_i!)^2}.
\end{equation}
Set
\[
a_m^{(1)}:=\frac{1}{(m!)^2},
\qquad
a_m^{(r+1)}:=\sum_{j=0}^m\frac{a_{m-j}^{(r)}}{(j!)^2},
\qquad 1\le r\le4.
\]
Then the inner sum in \eqref{eq:closed-walk} is $a_m^{(5)}$, so
all terms in the following finite computation are rational.

The target bound \eqref{eq:finite-sum-bound} was obtained with the following Python~3 code. The program was written with the help of Codex-GPT 5.5 and verified by the authors. The class
\texttt{Fraction} performs the finite computation in exact rational arithmetic;
floating-point arithmetic is used only to print decimal values and to evaluate
the explicit analytic tail bound.

\footnotesize
\begin{verbatim}
from fractions import Fraction
from math import factorial, pi, sqrt

N = 307

# b[m] = a_m^(1), followed by four exact convolutions.
b = [Fraction(1, factorial(m)**2) for m in range(N + 1)]
a = b[:]
for _ in range(4):
    a = [
        sum(
            (a[m-j] * b[j] for j in range(m + 1)),
            Fraction(0),
        )
        for m in range(N + 1)
    ]

# Exact partial sum through m = 307.
S = Fraction(1)
for m in range(1, N + 1):
    p = Fraction(factorial(2*m), 10**(2*m)) * a[m]
    S += (2*m + 1) * p

assert S < Fraction(188939, 100000)  # S < 1.88939

# Ball--Sterbenz applies for integer m >= 308.  The sum
# over m >= 308 is bounded by the integral from 307.
tail = (
    2 * (5 / (4*pi))**2.5
    * (4 / sqrt(307) + 2 / (3 * 307**1.5))
)
assert tail < 0.0457

# The displayed rounded bounds imply S_5 < 1.936
# by exact rational arithmetic.
assert (
    Fraction(188939, 100000) + Fraction(457, 10000)
    < Fraction(1936, 1000)
)

print("finite sum:", float(S))
print("tail bound:", tail)
print("total:", float(S) + tail)
\end{verbatim}

The output is
\begin{verbatim}
finite sum: 1.8893892815971292
tail bound: 0.045620165807501875
total: 1.935009447404631
\end{verbatim}

\printbibliography

\end{document}